\definecolor{myDeepPurple}{RGB}{80, 0, 80}
\definecolor{myDarkGreen}{RGB}{0, 80, 0}
\newtheorem{assumption}{Assumption}
\def\F{\mathfrak{F}}
\def\R{{\mathbb{R}}}
\def\G{{\mathcal{G}}}
\def\argmin{\mathop{\rm arg\,min}}
\def\Argmin{\mathop{\rm Arg\,min}}
\def\YIR{{\bf IR}} 
\def\IR{{\bf GIR}} 
\def\SPA{{\bf SPA}}
\newtheorem{fact}{Fact}
\newcommand{\prox}{\operatorname{prox}}
\newcommand{\dom}{{\rm dom\,}}
\newcommand{\Diag}{{\rm Diag\,}}
\begin{document}

\title{A double iteratively reweighted algorithm for solving group sparse nonconvex optimization models
}

\titlerunning{Double Iteratively Reweighted Algorithm for Group Sparse Nonconvex Models}        

\author{Wanqin Nie \and Kai Tu \and Minglu Ye \and Shuqin Sun$^*$ 
}

\authorrunning{Wanqin Nie\and Kai Tu \and Minglu Ye\and Shuqin Sun} 

\institute{Wanqin Nie \at
          \email{wanqinnie@hotmail.com} 
          \\
            \emph{School of Mathematical Sciences, China West Normal University, Nanchong, Sichuan  637009, China}\\           
           \and
           Kai Tu \at
          \email{kaitu$\_$02@163.com} 
          \\
          \emph{School of Mathematical Sciences, Shenzhen University, Shenzhen 518061, China }\\
          \emph{The Guangdong Key Laboratory of Intelligent Information Processing, Shenzhen University, Shenzhen 518060, China} \\
          \emph{Shenzhen Key Laboratory of Advanced Machine Learning and Applications, School of Mathematical Sciences, Shenzhen University, Shenzhen 518000, China}
              \and
           Minglu Ye \at
           \email{yml2002cn@aliyun.com}
           \\
             \emph{Key Laboratory of Optimization Theory and Applications at China West Normal
University of Sichuan Province, School of Mathematical Sciences, China West Normal University, Nanchong, Sichuan  637009, China}
              \and
           Shuqin Sun \at
          \email{sunshuqinsusan@163.com} 
          \\
            \emph{School of Mathematical Sciences, China West Normal University, Nanchong, Sichuan  637009, China}\\ 
            \emph{Key Laboratory of Optimization Theory and Applications at China West Normal
University of Sichuan Province, Nanchong, Sichuan  637009, China}
}

\date{Received: date / Accepted: date}

\maketitle

\begin{abstract}
In this paper, we propose a double iteratively reweighted algorithm to solve nonconvex and nonsmooth optimization problems, where both the objectives and constraint functions are formulated by concave compositions to promote group-sparse structures.
At each iteration, we combine convex surrogate with first-order information to construct linearly constrained subproblems to handle the concavity of model.
The corresponding subproblems are then solved by  alternating direction method of multipliers to satisfy the specific stop criteria. 
In particular, under mild assumptions, we prove that our algorithm guarantees the feasibility of each subsequent iteration point, and the cluster point of the resulting feasible sequence is shown to be a stationary point. Additionally, we extend the group sparse optimization model, pioneer the application of the double iterative reweighted algorithm to solve constrained group sparse models (which exhibits superior efficiency), and incorporate a generalized Bregman distance to characterize the algorithm's termination conditions.
Preliminary numerical experiments show the efficiency
of the proposed method.
\keywords{Group Sparsity \and Iteratively Reweighted \and ADMM \and Non-convex}
\subclass{49M37 \and 49N10 \and 65K10 \and 90C26}
\end{abstract}

\section{Introduction}
Sparse optimization is a fundamental branch of mathematical optimization centered on seeking solutions with sparse structures (i.e., solutions containing a large proportion of zero or near-zero elements). In essence, sparse optimization techniques have seen remarkable expansion across diverse applications such as image processing \cite{EladAharon2006,SchusterTrettnerKobbelt2020}, signal processing \cite{TanEldarNehorai2014,TangNehorai2014}, machine learning \cite{AbernethyBachEvgeniou2009}, and recommendation systems \cite{KorenBellVolinsky2009},
to name a few.
 Among key sparse optimization models, compressed sensing \cite{CandesRombergTao2006} enables accurate reconstruction of sparse or compressible signals with far fewer measurements than traditional sampling; matrix completion \cite{KorenBellVolinsky2009,ZhangYangChenYang2018} recovers low-rank matrices from limited observed entries, widely used in recommendation systems to predict unobserved user preferences; and group sparse optimization \cite{PengChen2020,HuLiMengQinYang2017} extends sparsity to variable groups, facilitating structured high-dimensional data analysis in fields like bioinformatics and multimedia processing.

 Studying group sparse optimization is key as it addresses the structured, group-level sparsity of variables that traditional sparse optimization overlooks. Recently, the group sparse optimization model has gained wide research attention due to new application scenarios. It preserves anatomical structures in medical image denoising via group-level sparsity \cite{LiliPan2021,HuLiMengQinYang2017}, and ensures consistent variable selection across gene clusters in biological data regression \cite{SimonFriedman2013,QinHuYaoLeung2021}. Compared with sparse optimization problems, group sparse optimization models  have the following  advantages: (i) It faithfully models variable correlations, yielding superior interpretability because the inherent group structure is preserved, showing through the performance in image reconstruction\cite{LiliPan2021}. (ii) It demonstrates greater robustness against noise because group-level statistics are less susceptible to erratic changes in individual variables \cite{HuangZhang2010,ChenSelesnick2014}. (iii) It simplifies post-processing by reducing computational complexity \cite{ZhaWenYuan2021}.

 Based on group sparse optimization models, we first recall some unconstrained models. Its core in \cite{HuLiMengQinYang2017} is an unconstrained group sparse optimization model based on $\ell_{p,q}$ regularization, formulated as 
\begin{equation*}
    \min_{x \in \mathbb{R}^n} F(x)=\| A x-b\|^{2}+\lambda\| x\| _{p,q}^{q},
\end{equation*}
 where $x$ has a predefined group structure, $\| x\| _{p,q}$ is the $\ell_{p,q}$ norm inducing group sparsity with $p \geq 1$ and 0 $\leq q \leq1$, and $\lambda>0$ is the regularization parameter. To solve this model, the authors propose the proximal gradient method (PGM-GSO). The model is applied to simulated data (for group sparse signal recovery, with $\ell_{p,1/2}$ regularization showing superior performance) and real gene transcriptional regulation data. 
 Yuan and Lin \cite{YuanLin2006} proposed an unconstrained group sparse model that uses least squares as the loss function and the $\ell_{2,1}$-norm regularization term to impose group-level sparsity. 
Fan and Li \cite{FanLi2001} put forward the smoothly clipped absolute deviation (SCAD) regularization theory based on nonconcave penalized likelihood. This theory was later extended to the unconstrained group SCAD regularization model.

 Subsequently, we introduce several constrained group sparse optimization problems. For convex-constrained group sparse regularization problems with convex but non-smooth loss functions,\cite{ZhangPeng2022} have proposed constructing models by relaxing group sparse penalty terms using group capped-$\ell_1$, and designed the group smoothing proximal gradient (GSPG) algorithm for solution. To solve group sparsity-constrained minimization problems, \cite{JHCZ2024} have transformed them into equivalent weighted $\ell_{p,q}$ norm constrained models. Based on the properties of Lagrangian duality, homotopy technology is adopted to handle parameter adjustment, ensuring that the algorithm output is the stationary point of the original problem. In \cite{LiliPan2021}, the authors consider the following special group sparse problem:
\begin{equation}\label{proble-pan}
	\begin{array}{rl}
		\min &\sum\limits_{i=1}^m \psi(\|x_i\|),
        \quad 
	{\rm s.t.}  \quad  \|Ax-b\| \leq \sigma,~~Bx\leq h,
	\end{array}
\end{equation} 
where $x$ is partitioned into $m$ non-overlapping groups $\displaystyle\{x_i\}_{i = 1}^m$ with each $x_i \in \mathbb{R}^{n_i}$ and $\sum_{i = 1}^m n_i = n$,  the group-wise sparsity induces vanishing entries, $\psi$ is a capped folded concave function.
They propose smoothing penalty algorithm ($\SPA$ for short) to solve this problem and  show that the sequences generated by the algorithm converge to the directional stationary points. 
Based on the smoothing technique and penalty stragety, 
$\SPA$ needs to apply nonmonotone proximal gradient to solve a unconstrained nonconvex sub-problem until a specific criteria holds.


 In the research progress of the models, scholars have found that incorporating nonconvex relaxation terms into the objective function, such as the SCAD penalty \cite{JiangDF2012, FanLi2001, WangLChenG2007} and minimax concave penalty (MCP) \cite{ZhangC2010, HuangJBPMS2012} can better promote sparsity and reduce estimation bias. These properties make them particularly suitable for high-dimensional data analysis and variable selection tasks. In contrast, the use of nonconvex loss functions in the constraint component, including Tukey loss \cite{MostellerTukey1977}, Huber loss \cite{HuberPJ1992}, the 0-1 loss \cite{Schapire1990}, and its surrogate losses \cite{HastieTFJTR2001} enhances the robustness of the model, enabling effective handling of noise and outliers. Investigating problem models that integrate nonconvex relaxation functions with nonconvex loss functions holds significant importance in sparse optimization and complex data modeling. Despite the increased computational challenges associated with solving nonconvex problems, such models offer superior sparsity, robustness, and model flexibility. Consequently, the following nonconvex sparse optimization problem was considered in \cite{SSQTKP23}:
\begin{equation}\label{problem for ssq}
	\begin{array}{rl}
		\min\limits_{x\in \R^n} &\sum\limits_{i=1}^n \psi(|x_i|), \quad 
		{\rm s.t.} \quad  \sum\limits_{i=1}^m \phi\left(\left(a_i^Tx -b_i\right)^2\right)\leq \sigma,
	\end{array}
\end{equation}
which incorporates a nonconvex relaxation
in the objective function and a nonconvex loss function in the constraint part. A double iterative reweighting algorithm was devised by the authors to solve this problem, in which the  subproblem was approximated and solved using a convex subproblem solver.

Given the aforementioned necessity of employing group sparse optimization, we extend the problem \eqref{problem for ssq} to address the group sparisty of decision variables.
That is, we consider the following group sparse optimization problem:
\begin{equation}\label{Problem}
   \begin{array}{rl}
     \min\limits_{x\in \R^n} &\sum\limits_{i=1}^{q} \psi(\|x_{\mathcal{G}_i}\|),\quad
{\rm s.t.} \quad \sum\limits_{i=1}^{m} \phi \left(\left(a_{i}^T x-b_{i}\right)^2\right)\leq\sigma,
   \end{array}
\end{equation}
where $\left\{\mathcal{G}_1,\cdots,\mathcal{G}_q\right\}$ forms a partition of the index set $\left[ n \right] = \left\{ 1,2,\cdots,n \right\}$ and satisfies $\mathcal{G}_i \cap \mathcal{G}_j = \emptyset$ for all $ i \neq j $ and $\cup_{i=1}^{q}\mathcal{G}_i= \left[ n \right]$, $x_{\mathcal{G}_{i}}\in\R^{|\G_i|}$ denotes the $i$-th block of $x$ indexed by $\G_i$. Then we present $x_{\mathcal{G}}=\left(\|x_{\mathcal{G}_{1}} \|,\cdots , \|x_{\mathcal{G}_{q}}\|\right)^T$, which facilitates the subsequent discussion. Furthermore,  $m\ll n$, $a_{i}^T$ represents the $i$-th row of the matrix $A \in \R^{m \times n}$; $b \in \R^m$, $\sigma > 0$ and two functions $\psi, \phi:\R_{+} \rightarrow \R_{+}$  satisfy the following assumptions:

\begin{assumption}\label{assum}  {\rm(i)} The function $\psi$ is continuous and strictly concave over $\R_{+}$ with $\psi(0)=0$ and ${lim}_{t\to \infty} \psi\left(t\right)=\infty$. $\psi$ is differentiable on $(0,\infty)$, and $\psi'\left(t\right)>0$ for all $t \in \left(0,\infty \right)$. Furthermore, $\lim_{t\downarrow 0}\psi'(t)$ exists and satisfies $\lim_{t\downarrow 0}\psi'\left(t\right) \in \left(0,\infty \right)$.

{\rm(ii)} The function $\phi$ is continuous and concave over $\R_{+}$ with $\phi\left(0\right) = 0$. It is differentiable on $(0,\infty)$, and $\phi'\left(t\right) \geq 0$ for all $t>0$. Furthermore,  $\lim_{t\downarrow 0}\phi'\left(t\right)$ exists and satisfies $\lim_{t\downarrow 0}\phi'\left(t\right) \in \left( 0,\infty \right)$.

{\rm(iii)} The matrix $A$ is full row rank, and  $\sigma \in \left(0,\sum\limits_{i=1}^{m}\phi\left(b_{i}^2\right)\right)$.
\end{assumption}

 By making the above assumptions, we ensure that the feasible set of \eqref{Problem} does not contain $x = 0$. Moreover, since  $\psi$ and $\phi$ are differentiable and concave, their right derivatives $\psi'_+$ and $\phi'_+$ are continuous. This continuity proves useful in our subsequent analysis. It can also be observed that problem \eqref{Problem} reduces to problem \eqref{problem for ssq} in \cite{SSQTKP23} when $q=n$ given they both satisfy Assumption \ref{assum}.
However, due to the existence of gourp structure, the method in \cite{SSQTKP23} can't directly apply to solve the new model \eqref{Problem}.
The main contributions of this work are as follows:

\begin{itemize}
 \item[$\bullet$] {We extend the group sparse optimization model by introducing non-convex inequality constraints, enhancing its robustness in practical scenarios. When recovering group structured signals via the double iterative reweighted algorithm, problem \eqref{Problem} outperforms problem \eqref{problem for ssq} in recovery performance.}
    \item[$\bullet$]{The double iterative reweighted algorithm is first applied to solve constrained group sparse models, demonstrating superior efficiency compared to existing methods. When recovering the same group structured signal via the same model using different group sparse optimization algorithms, the algorithm proposed in this paper outperforms the one in \cite{LiliPan2021}. We note that while both this paper and \cite{SSQTKP23} use the iteratively reweighted algorithm, our model (a generalization of \cite{SSQTKP23} under group sparsity) lets our algorithm overcome the problem of solving the proximal operator in the inner loop. This challenge does not exist in \cite{SSQTKP23}.}
    \item[$\bullet$]{A generalized Bregman distance is incorporated to characterize termination conditions for nonlinear non-strongly convex subproblems, improving the applicability of the algorithm. Although  increase the difficulty of analyzing the convergence of the designed algorithm and verifying whether the subproblems meet the termination criteria, we have successfully overcome these challenges.}
\end{itemize}

The rest of this paper is organized as follows. In Section \ref{section2}, we introduce notations and preliminary materials to be used in subsequent analysis. In Section \ref{section3}, we present the algorithm flow of problem \eqref{Problem}, the construction process of the convex subproblem, and the progress of ADMM for solving the subproblem. Additionally, in this section, we also use Theorem \ref{Theoremforsubproblemkkt} to illustrate that the ADMM satisfies the three termination conditions set by the algorithm. In Section \ref{section4}, we establish the convergence analysis of the algorithm to validate its feasibility. Finally, in Section \ref{section5}, we verify the effectiveness of the algorithm through numerical comparison experiments conducted in MATLAB.

\section{Notation and preliminaries}\label{section2}

Throughout this paper, we denote $\R^n$ as the 
$n$-dimensional Euclidean space, $\R_{+}^n$ as the nonnegative orthant of $\R^n$ and let $\mathbb{N}$ denote the set of natural numbers. For a given vector $x\in \R^n$, $x_i$ denotes its $i$-th component. The notation $\sqrt{x}$ is used to represent the vector whose $i$-th component is $\sqrt{x_{i}}$, and $\|x\|$ denotes the $\ell_2$-norm of $x$. 
We let Diag($x$) be the $n\times n$ diagonal matrix with the $i$-th diagonal entry equal to $x_{i}$. For any two vectors $u,v\in\R^n$, $u\circ v$ and $\langle u ,v  \rangle$ denote their Hadamard (entry-wise) product, and   inner product, respectively. For any $x,y\in\R^n$,  they have the properties $\|x\|=\|x_{\G}\|$ and $\|x_{\mathcal{G}}-y_{\mathcal{G}}\| \leq \|(x- y)_\mathcal{G}\|$, which ensures that Proposition \ref{proposition}(ii) holds.

We say that an extended-real-valued function $f:\mathbb{R}^{n}\rightarrow\left(-\infty,+\infty\right]$ is proper if its domain ${\rm dom}\,f =\left\{x\in \mathbb{R}^n:\;f\left(x\right)<\infty\right\}$ is nonempty. A proper function is said to be closed if it is lower semicontinuous. For a proper function $f$, the regular subdifferential and Mordukhovich (limiting) subdifferential\cite[Definition 8.3]{variationalanalysis} of $f$ at $\bar{x}\in {\rm dom}\,f$ are defined respectively as
	\[
	\widehat{\partial}f(\bar{x})=\left\{v\in{\mathbb{R}^{n}}:\liminf_{x\rightarrow{\bar{x}},x\neq{\bar{x}}}\frac{f(x)-f(\bar{x})-v^T(x-\bar{x})}{\|x-\bar{x}\|}\geq{0}\right\},
	\]
	and
	\[
	\partial{f(\bar{x})}=\left\{v\in{\mathbb{R}^{n}}:\exists{x^{k}}\rightarrow{\bar{x}}~\mbox{and}~v^{k}\in\widehat{\partial}{f\left(x^{k}\right)}~\mbox{s.t.}~f(x^{k})\rightarrow{f(\bar{x})}~\mbox{and}~v^{k}\rightarrow{v}\right\}.
	\]
	By convention, we set $\widehat{\partial}f\left( x \right) = \partial{f\left( x \right)} = \emptyset$ if $x\notin{\rm{dom}}\, f$, and define the domain of $\partial{f}$ as ${\rm{dom}}\partial{f}=\left\{x\in \mathbb{R}^n:\;\partial{f\left(x\right)} \neq {\emptyset}\right\}$.
	When $f$ is proper and convex, the limiting subdifferential of $f$ at $x\in{\rm{dom}}\,f$ reduces to the classical subdifferential in convex analysis, i.e.,
	\[
	\partial{f} \left(x\right)=\left\{\xi\in \mathbb{R}^n:\;\xi^T \left(y - x\right) \leq {f\left(y\right) - f\left(x\right)}\ \ \forall y\in \mathbb{R}^n\right\};
	\]
	see \cite[Proposition 8.12]{variationalanalysis}.
	For a nonempty set $B$, the indicator function $\delta_{B}$ is defined as
	\[
	\delta_{B}(x)=
	\begin{cases}
		0,& {\rm if}\ x\in{B},\\
		\infty,& {\rm if}\ x\notin{B}.
	\end{cases}
	\]
	The normal cone (resp., regular normal cone) of $B$ at  $x \in {B}$ is defined as $N_{B}\left(x\right)=\partial{\delta_{B}} \left(x\right)$ (resp., $\widehat{N}_{B}\left(x\right)=\widehat{\partial}{\delta_{B}}(x)$), and the distance from any $x\in \mathbb{R}^n$ to $B$ is defined as ${\rm dist} \left(x,B\right) = \inf\limits_{y\in B}\|x - y\|$,  and if $B=\emptyset$, we  take $d\left(x, B\right)=\infty$ by convention. If $f$ is proper and closed, then the proximal mapping  of $f$ at $x\in \mathbb{R}^n$ with scaling parameter \( \lambda > 0 \) is defined as
	\begin{equation}
		\label{eq:def-prox}
		\begin{aligned}
			\prox_{\lambda f}(x)= \argmin_{u \in \mathbb{R}^n} \left\{f(u) + \frac{1}{2\lambda} \| x - u \|^2 \right\}.\\
		\end{aligned}
	\end{equation}

The Bregman distance \cite{BanerjeeMerugu2005,TuZhangGao2020}, also known as the Bregman divergence, has been extensively studied by scholars and is defined as follows.
\begin{definition}\label{def1}
     (Bregman distance) Let $f:\mathbb{R}^n\rightarrow \mathbb{R}\cup\{+\infty \}$ be a function that is continuously differentiable and strictly convex. For any two points $x\in \mathbb{R}^n$ and $x_0\in\mathbb{R}^n \cap \dom{f}$, the Bregman distance from $x_0$ to $x$ is defined as:
    \begin{equation*}
        D_{f}\left( x,x_0 \right)=f\left(x\right)-f\left(x_0\right)-\langle \nabla f\left(x_0\right), x-x_0 \rangle.
    \end{equation*}
\end{definition}

Using this definition, we define the Bregman distance from a point $x$ to a set $C\subset\mathbb{R}^n$ as $D_{f}\left(C,x\right)$ in the form:
    \begin{equation*}
        D_{f}\left(C,x\right) = \inf\limits_{y\in C} D_f\left(y,x\right).
    \end{equation*} 
 If $C=\emptyset$, we take $D_{f}\left(C,x\right)=\infty$ by convention.
%
%
If $f=\|\cdot\|^2$, then the Bregman distance reduces to the square of the Euclidean distance, that is, $D_{f}\left(C,x\right) = \inf\limits_{y\in C} \|y-x\|^2 = {\rm dist}^2(x,C)$.
To proceed with the convergence analysis in Section \ref{section4}, we apply \cite[Theorem 2.4]{SolodovSvaiter2000} with the choices $S=\R^n$ and $y^k=0$ for all $k$ and obtain the following Fact \ref{helptoxi0} concerning Bregman distance.

\begin{fact}
\label{helptoxi0}
 Let $f$ be defined in Definition \ref{def1}.  Given the sequence $\left\{x^n\right\}_{n\in\mathbb{N}}\subseteq \R^n$, if \[
   \lim_{n\to\infty}D^f \left(x^n,0\right) = 0,
    \]
    then we have that $\displaystyle \lim_{n\to \infty}x^n=0$.
\end{fact}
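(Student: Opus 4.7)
The plan is to reduce the claim to three properties of the function $g:\mathbb{R}^n\to\mathbb{R}$ defined by $g(x):=D_f(x,0)=f(x)-f(0)-\langle \nabla f(0),x\rangle$: (i) continuity on $\mathbb{R}^n$; (ii) $g(x)\geq 0$ with equality only at $x=0$; and (iii) coercivity, meaning all sublevel sets $\{x\in\mathbb{R}^n:g(x)\leq\alpha\}$ are bounded. Once (i)--(iii) are in hand the assertion follows by a standard subsequence argument: the convergence $g(x^n)\to 0$ places $\{x^n\}$ eventually in a bounded sublevel set, so any cluster point $x^*$ exists; by continuity $g(x^*)=0$; by (ii) this forces $x^*=0$; and since every cluster point equals $0$, the whole sequence converges to $0$.

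Properties (i) and (ii) are nearly immediate. Continuity holds because $f$ is $C^1$. For (ii), strict convexity of the differentiable $f$ gives $f(x)>f(0)+\langle\nabla f(0),x\rangle$ whenever $x\neq 0$, which is precisely $g(x)>0$, while $g(0)=0$ trivially.

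The substantive step, and the main obstacle, will be property (iii); this is where the hypothesis that $f$ is strictly convex on \emph{all} of $\mathbb{R}^n$ genuinely enters. I would argue by contradiction: assume there exist $\{z^k\}\subseteq\mathbb{R}^n$ with $\|z^k\|\to\infty$ while $g(z^k)$ stays bounded. Put $v^k:=z^k/\|z^k\|$ and pass to a subsequence along which $v^k\to v$ with $\|v\|=1$. Since $g$ inherits convexity from $f$ and $g(0)=0$, for any fixed $T>0$ and $k$ large enough that $\|z^k\|>T$ the identity $Tv^k=(T/\|z^k\|)z^k+(1-T/\|z^k\|)\cdot 0$ together with convexity gives
\[
g(Tv^k)\leq \tfrac{T}{\|z^k\|}\,g(z^k)+\bigl(1-\tfrac{T}{\|z^k\|}\bigr)g(0)\longrightarrow 0.
\]
Continuity of $g$ then forces $g(Tv)=0$, contradicting (ii) since $Tv\neq 0$. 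Once coercivity is established, the final subsequence argument sketched in the first paragraph completes the proof.
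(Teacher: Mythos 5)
Your proof is correct, but it is not the route the paper takes: the paper gives no direct argument for this fact at all, obtaining it instead as a special case of \cite[Theorem 2.4]{SolodovSvaiter2000} applied with $S=\mathbb{R}^n$ and $y^k=0$. Your self-contained alternative --- showing that $g(\cdot):=D_f(\cdot,0)$ is continuous, vanishes only at the origin (the gradient inequality for a strictly convex differentiable $f$), and has bounded sublevel sets (the normalization argument with $v^k=z^k/\|z^k\|$ and convexity of $g$ along the segment $[0,z^k]$), followed by the standard cluster-point argument --- is sound. The citation buys the authors brevity and applicability in the general Bregman-function framework; your route buys transparency and independence from that machinery. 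Two minor remarks: (a) the coercivity step can be shortened, since for $\|x\|\ge\epsilon$ convexity of $g$ on the segment $[0,x]$ gives $g(x)\ge\min_{\|u\|=\epsilon}g(u)>0$ directly, and for the Fact one bounded sublevel set suffices; (b) you implicitly use that $f$ is finite-valued on all of $\mathbb{R}^n$ so that $g$ is continuous everywhere --- this is what ``continuously differentiable'' in Definition \ref{def1} effectively forces, even though the codomain there is written as $\mathbb{R}\cup\{+\infty\}$, and it is worth stating.
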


Consider problem \eqref{Problem} under Assumption \ref{assum}. We introduce the following notations for the convenience of the forthcoming discussion. For any $x\in\R^n$, $y\in\R^m_+$, and $z\in\R^q_+$, we define
\begin{align}\label{notation1}
\Phi(y)&=\sum\limits_{i=1}^m\phi(y_{i}),~\Phi_{+}'(y) = \left(\phi_{+}'(y_{1}),\phi_{+}'(y_{2}),\cdots,\phi_{+}'(y_{m})\right)^T,\notag\\
\Psi(z)&=\sum\limits_{i=1}^q\psi(z_{i}),~\Psi_{+}'(z) = \left(\psi_{+}'(z_{1}),\psi_{+}'(z_{2}),\cdots,\psi_{+}'(z_{q})\right)^T, \quad \text{and}\\
  G(x)&= x_{\mathcal{G}} = \left( \|x_{\mathcal{G}_1}\|,\|x_{\mathcal{G}_2}\|,\cdots,\|x_{\mathcal{G}_q}\| \right)^T,\notag
\end{align}
where $\G_i~(i = 1,2,\cdots,q)$ is defined in \eqref{Problem}.

We next present the definition of the stationary point of problem \eqref{Problem}.

\begin{definition} \label{KKT}
 A point $x\in \R^n$ is called a stationary point of problem \eqref{Problem} if there exists $\lambda \in \R_{+}$ such that the following conditions hold for $\left(x,\lambda\right)$:
\begin{align}
       &\displaystyle  \lambda\left(\Phi \left((Ax-b)\circ(Ax-b)\right)-\sigma\right) = 0,  \label{kkt1} \\
        &\displaystyle  \Phi \left((Ax-b)\circ(Ax-b)\right) \leq \sigma, \label{kkt2} \\
       &\displaystyle 0 \in \Psi_{+}' \left(x_{\mathcal{G}}\right) \circ \partial G(x) +2\lambda \sum\limits_{i=1}^{m} \phi_{+}'\left( \left(a_i^T x-b_i \right)^2\right) \left(a_i^T x -b_i\right)a_i,  \label{kkt3}
\end{align}
\end{definition}

 The following fact reveals 
that any local minimzer of problem \eqref{Problem} is the stationary point under Mangasarian-Fromovitz constraint qualification ~\eqref{def:MFCQ}. This follows by sequentially applying arguments analogous to those in \cite[Proposition 2.1]{SSQTKP23} and then \cite[Proposition 2.2]{SSQTKP23} under Assumption \ref{assum}, exploiting the local Lipschitz continuity of $\psi\circ \|\cdot\|$.

\begin{fact}\label{implyMFCQ}
  Suppose that Assumption \ref{assum} holds.
       If there exists $y\in \R^m$ such that $y = Ax-b$ and $\sum\limits_{i=1}^m \phi \left(y_i^2\right) = \sigma$,
       where
$      \sigma \notin \left\{k \bar{\phi}: \, k = 1,\cdots,m\right\}$, with
    $\bar{\phi}=\sup\limits_{t\in \R_+}\phi (t) \in \left(0,\infty\right]$.
       Then Mangasarian-Fromovitz constraint qualification holds, that is
       \begin{equation}\label{def:MFCQ}
    \begin{aligned}
        \Phi \left((Ax-b)\circ (Ax-b)\right) = \sigma \Longrightarrow \sum\limits_{i=1}^{m}\phi_{+}'\left((a_i^Tx -b_i)^2\right) \left(a_i^T -b_i\right) a_i \neq 0.
    \end{aligned}
\end{equation}
Furthermore,  
any local minimizer   of  problem \eqref{Problem} is its stationary point.
\end{fact}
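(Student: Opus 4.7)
The plan is to follow the two-stage strategy of \cite{SSQTKP23}: first establish the constraint qualification \eqref{def:MFCQ} directly from the shape of $\phi$ and the full-row-rank of $A$, and then, with MFCQ in hand, apply a standard nonsmooth KKT rule to any local minimizer to derive \eqref{kkt1}--\eqref{kkt3}. The only real adaptation needed is that the objective $\Psi \circ G$ is now a group-sparse composition rather than a coordinate-wise one, so the subdifferential calculus has to be redone through the block-norm map $G(x)$.

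For the MFCQ implication I will argue by contradiction. Suppose $\Phi((Ax-b)\circ(Ax-b)) = \sigma$ yet $\sum_{i=1}^m \phi_{+}'((a_i^Tx-b_i)^2)(a_i^Tx-b_i)\,a_i = 0$. Since $A$ has full row rank, the rows $a_1,\dots,a_m$ are linearly independent, so each scalar coefficient must vanish: for every $i$, either $a_i^Tx = b_i$ or $\phi_{+}'((a_i^Tx-b_i)^2) = 0$. Because $\phi$ is concave on $\mathbb{R}_+$ with $\phi_{+}'(t)\ge 0$ and $\lim_{t\downarrow 0}\phi_{+}'(t)>0$, the right derivative is nonincreasing and strictly positive near $0$; hence $\phi_{+}'(t_0)=0$ for some $t_0>0$ forces $\phi$ to be constant equal to $\bar\phi$ on $[t_0,\infty)$. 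Set $I := \{i : a_i^Tx = b_i\}$; then for $i\notin I$ we have $\phi((a_i^Tx-b_i)^2)=\bar\phi$, and summing gives $\sigma = |I^c|\,\bar\phi$. Since $\sigma>0$ we must have $|I^c|\ge 1$, so $\sigma\in\{k\bar\phi:k=1,\dots,m\}$, contradicting the hypothesis. This proves \eqref{def:MFCQ}.

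For the second assertion, let $\xfeas$ be a local minimizer of \eqref{Problem}. Because $\psi$ is continuous on $\mathbb{R}_+$ and differentiable on $(0,\infty)$ with a finite right derivative at $0$, the composition $x\mapsto \psi(\|x_{\mathcal{G}_i}\|)$ is locally Lipschitz around every point, and the constraint function $x\mapsto \Phi((Ax-b)\circ(Ax-b))-\sigma$ is likewise locally Lipschitz. I will invoke the standard nonsmooth multiplier rule (e.g.\ \cite[Thm.\ 6.14]{variationalanalysis} combined with the basic qualification $0\notin\partial(\Phi(\cdot)-\sigma)(\xfeas)$ if the constraint is active, which is precisely MFCQ just established). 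This yields a $\lambda\ge 0$ with the complementarity \eqref{kkt1} and feasibility \eqref{kkt2}, together with $0 \in \partial(\Psi\circ G)(\xfeas) + \lambda\,\partial\bigl(\Phi\circ h\bigr)(\xfeas)$, where $h(x)=(Ax-b)\circ(Ax-b)$. Using continuity of $\psi_{+}'$ and $\phi_{+}'$, the smooth chain rule applies block-wise: the subdifferential of $\Psi\circ G$ at $\xfeas$ is $\Psi_{+}'(x_{\mathcal{G}})\circ \partial G(\xfeas)$, and the gradient of $\Phi\circ h$ is $2\sum_{i=1}^m \phi_{+}'((a_i^T\xfeas-b_i)^2)(a_i^T\xfeas-b_i)a_i$. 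Substituting recovers \eqref{kkt3}.

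The main obstacle I anticipate is justifying the chain rule for $\Psi\circ G$ when some block $x_{\mathcal{G}_i}$ vanishes: at such blocks $\partial\|x_{\mathcal{G}_i}\|$ is the unit ball rather than a singleton, and one must verify that multiplying by the scalar $\psi_{+}'(0)\in(0,\infty)$ (the relevant value of $\Psi_{+}'$) is compatible with the limiting subdifferential calculus. This is handled exactly as in \cite[Prop.\ 2.2]{SSQTKP23} by exploiting local Lipschitzness of $\psi\circ\|\cdot\|$ together with the outer smoothness supplied by the continuity of $\psi_{+}'$, so that the sum and chain rules hold with equality rather than mere inclusion; the rest is bookkeeping with the group-index partition $\{\mathcal{G}_1,\dots,\mathcal{G}_q\}$.
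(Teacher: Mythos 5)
Your proposal is correct and follows essentially the same route the paper intends: it reconstructs the arguments of \cite[Propositions 2.1 and 2.2]{SSQTKP23} (linear independence of the rows of $A$ forces each coefficient $\phi_+'((a_i^Tx-b_i)^2)(a_i^Tx-b_i)$ to vanish, concavity then pins $\sigma$ to a multiple of $\bar\phi$, and MFCQ plus the locally Lipschitz chain rule for $\Psi\circ G$ yields the KKT system). The only cosmetic gap is the case $\bar\phi=\infty$, where $\phi_+'(t_0)=0$ cannot occur for any $t_0>0$, so the contradiction there is $\sigma=0$ rather than $\sigma=k\bar\phi$; this does not affect the conclusion.
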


\section{Double iteratively reweighted algorithm for group sparsity models}\label{section3}

In this section, we present our algorithm to solve problem \eqref{Problem} whose feasible set is defined by 
 $\F=\left\{x\in \R^n:\Phi \left((Ax-b) \circ (Ax-b)\right) \leq \sigma \right\}$.  
We begin by outlining the idea of algorithm design based on the assumptions in Fact \ref{implyMFCQ}. Taking advantage of the concavity of $\phi$ and $\psi$, 
given $x^k$, we have that
\begin{align*} 
  &\sum\limits_{i=1}^{q} \psi \left(\|x_{\mathcal{G}_i}\| \right) \leq \sum\limits_{i=1}^{q} \psi \left(\|x_{\mathcal{G}_i}^k\| \right) +
  \sum\limits_{i=1}^{q} \psi_{+}' \left(\|x_{\mathcal{G}_i}^k\| \right)
   \left(\|x_{\mathcal{G}_i}\|-\|x_{\mathcal{G}_i}^k\| \right), \quad \text{and}\\
     &\sum\limits_{i=1}^m\phi \left(\left(a_{i}^Tx-b_{i}\right)^2\right)
  \leq\Phi \left((Ax^k-b)\circ(Ax^k-b)\right)\\
&~~~~~~~~~~~~~~~~~~~~~~~~~~~~~~+\sum\limits_{i=1}^m\phi_{+}' \left( \left(a_{i}^Tx^k-b_{i}\right)^2\right) \left[\left(a_{i}^Tx-b_{i}\right)^2 - \left(a_{i}^Tx^k-b_{i}\right)^2\right].
\end{align*}
The above two inequalities inspire   us to formulate the following relaxed subproblem:
\begin{equation}\label{original_subproblem}
 \begin{aligned}
  \min\limits_{x\in \R^n}&\sum\limits_{i=1}^{q}\psi_{+}' \left(\|x_{\mathcal{G}_i}^k\|\right) \cdot \|x_{\mathcal{G}_i}\|\\
  \text{s.t.}& ~\Phi \left((Ax^k-b)\circ(Ax^k-b)\right)\\
  ~&+\sum\limits_{i=1}^m\phi_{+}' \left( \left(a_{i}^Tx^k-b_{i}\right)^2\right)
  \left[\left(a_{i}^Tx-b_{i}\right)^2-\left(a_{i}^Tx^k-b_{i}\right)^2\right] \leq \sigma,
\end{aligned}
\end{equation}
 which, by using the notations in \eqref{notation1}, can be reformulated  as
 \begin{equation}
 \label{zproblem}
  \min\limits_{x\in \R^n} \|\omega_{\mathcal{G}}^k\circ G(x)\|_{1}, \quad 
  \text{s.t.} \quad A_{k}x-u =b^k,\quad  u\in U^k,
\end{equation}
where 
\begin{equation}\label{notation2}
 \begin{aligned}
&\displaystyle  \omega_{\mathcal{G}}^k \!= \! \Psi_{+}' \!\!\left(x_{\mathcal{G}}^k\right),~A_{k}\!=\!\Diag \!\!\left(\upsilon^k\right)\! A~{\rm with}~\upsilon^k \!\!=\!\sqrt{\Phi_{+}'\!\! \left(y^k\circ y^k\right)}~{\rm and}~y^k \!\!=\! Ax^k\!\!-\!b,\\
&\displaystyle  b^k =\upsilon^k \circ b,
  ~~\sigma_{k}\!=\! \sigma \!+\! \|A_{k}x^k \!-\! b^k\|^2 \!-\! \Phi \left(y^k\circ y^k\right), ~~U^k= \{ u: \| u\|^2 \!\leq \! \sigma_{k} \}.
 \end{aligned}
\end{equation}

Let $\mathfrak{F}_k$ denote the feasible set of \eqref{zproblem}, that is, $\mathfrak{F}_k = \{ x\in \mathbb{R}^n : \|A_k x - b^k\|^2 \leq \sigma_k \}$.
Suppose further that $x^k$ is a feasible point of problem \eqref{Problem}. Then from \eqref{original_subproblem} and  \eqref{notation2}, we can see that $x^k\in\mathfrak{F}_k$. This ensures nonemptiness of $\mathfrak{F}_k$. Moreover, since the objective in \eqref{zproblem} is level-bounded which is implied by the positivity of $\omega_{\mathcal{G}}^k$, the solution set of subproblem \eqref{zproblem} is nonempty.
Therefore, the subproblem \eqref{zproblem} is well defined if $x^k\in\mathfrak{F}$.

Note that the functions in subproblem \eqref{zproblem} are convex. Thus, if subproblem \eqref{zproblem} is valid,  it  can be inexactly solved by some first order algorithms efficiently, such as alternating direction method of multipliers (ADMM) that we can see later. 
Here we adopt the Bregman distance to measure one of the termination conditions of the subproblem computation instead of the usual European distance used in \cite{SSQTKP23}. 
However, the resulting approximate point maybe infeasible to problem \eqref{zproblem}. 
In order to guarantee the feasibility, we also need to retract the approximate solution  to $\F_k$. 

Based on above reason, we subsequently present our Algorithm \ref{algorithm1} ($\IR$ for short) below for solving problem \eqref{Problem} under the assumptions in Fact \ref{implyMFCQ}.
    \begin{algorithm}[H]
        \caption{Doubly iteratively reweighted algorithm.
        }\label{algorithm1}
        \begin{algorithmic}[1]
             \State \textbf{Input:} Take a positive diminishing sequence $\{\tau_{k}\}_{k\in \mathbb{N}}$, and a summable positive sequence $\{\mu_{k}\}_{k\in \mathbb{N}}$. Set  $x^0=A^{\dagger}b$ and   $k=0$.  
             \While{the stopping criteria does not hold}
             \State Compute $\omega_{\mathcal{G}}^k$, $y^k$,$\upsilon^k$, $A_{k}$, $b^k$ and $\sigma_{k}$ as defined in \eqref{notation2}.  
             \State Find a pair $\left(\widetilde{x}^{k+1},\widetilde{u}^{k+1}\right)$ by inexactly solving subproblem \eqref{zproblem} such that the following three conditions hold:
             \begin{align}
             & \displaystyle D^f \left(\partial \left(\omega_{\mathcal{G}}^k\circ G\right) (\tilde{x}^{k+1}) + A_{k}^T N_{ U^k}\left(\tilde{u}^{k+1}\right),\; 0 \right) \leq\epsilon_{k}, \label{zzzz1} \\
             &\displaystyle  \|A_{k}\tilde{x}^{k+1} - b^k - \tilde{u}^{k+1}\| \leq \epsilon_{k},\label{zzzz2}\\
             &\displaystyle \|\omega_{\mathcal{G}}^k\circ G( P_{k} ( \tilde{x}^{k+1})  )\|_{1} \leq \|\omega_{\mathcal{G}}^k\circ x_{\mathcal{G}}^k\|_{1} + \mu_{k}, \label{zzzz3}
             \end{align} 
             where $\epsilon_{k} \in \left(0,\min \left\{\sigma_{k},\sqrt{\sigma_{k}},\tau_{k} \right\}\right]$,  
\begin{align*}
    P_{k}(x)&=\begin{cases}
                    x, & {\rm if}\ \|A_kx - b^k\|^2 \leq \sigma_k,\\
                    \left(1-\frac{\sqrt{\sigma_{k}}}{\|A_{k}x-b^{k}\|}\right)A^\dagger b + \frac{\sqrt{\sigma_{k}}}{\|A_{k}x-b^{k}\|}x, & {\rm otherwise}.
                \end{cases}
    \end{align*} 
            \State Set $x^{k+1} = P_{k}\left(\tilde{x}^{k+1}\right)$, and   $k=k+1$.    

        
    \EndWhile
    
        \end{algorithmic}
    \end{algorithm}


\begin{remark}\label{feasiblility}
We have the following comments  for Algorithm \ref{algorithm1}.
\begin{itemize}
    \item[(i)] Due to the definition of $\mathfrak{F}$ and $A^{\dagger}$, we have that $x^0=A^{\dagger}b \in \mathfrak{F}$.
    \item[(ii)] From \cite{LSM2020}, we have
\begin{equation}\label{subd_norm2}
\partial \left(\omega^k_\mathcal{G} \circ G\right)\left({x}^{k+1}\right) = \omega^k_\mathcal{G} \circ \partial G\left({x}^{k+1}\right)   =J_{1}\times J_{2}\times \cdots \times J_{q}.
\end{equation}
where for any $i\in \{1,\cdots, q\}$, 
\begin{equation*}
    J_i=
    \begin{cases}
\left(\omega_{\mathcal{G}}^k\right)_{i} \cdot \frac{{x}_{{\mathcal{G}}_i}^{k+1}}{\|{x}_{{\mathcal{G}}_i}^{k+1}\|},& {\rm if}\  {x}_{{\mathcal{G}}_i}^{k+1}\neq 0,\\
    B\left(0,(\omega_{\mathcal{G}}^k\right)_{i}), & {\rm otherwise}.
    \end{cases}
\end{equation*}

\item[(iii)] In view of the concavity of $\phi$, we have  $P_{k}(x) \in \mathfrak{F}_k \subseteq \mathfrak{F}$ for all $x \in \R^n$.

\item [(iv)]     It follows from \cite[Lemma 3.1]{SSQTKP23} that for each $k$, $0<\sigma_k\leq\sigma$.
\end{itemize}

\end{remark}


\subsection{Subproblem computation}

In this subsection, we apply proximal ADMM to solve the inner subproblem \eqref{zproblem}.
As outlined in Algorithm \ref{algorithm1}, if $x^k\in\F$, then subproblem \eqref{zproblem} is well defined by virtue of having a nonempty solution set. Thus in view of Remark \ref{feasiblility}, which shows that $x^0\in\F$ and all generated iterates $x^{k+1}$ remain in $\F$, we can see that Algorithm \ref{algorithm1} is well-defined by showing that the tuple $\left(\widetilde{x}^{k+1},\widetilde{u}^{k+1}\right)$ generated by the subproblem solver satisfies the inexact criteria \eqref{zzzz1}, \eqref{zzzz2} and \eqref{zzzz3}.



Note that subproblem \eqref{zproblem} is a convex problem with two block variables, decision varibale $x$  and the slack variable $u$, and  one linear constraint.
These structures motivate us apply  proximal ADMM to solve it.
Let
$L_\beta$ be the augmented lagrangian function of \eqref{zproblem}, that is,
\begin{align*}
      L_\beta (x,u;z)=\|\omega^{k}_{\mathcal{G}} \circ G(x)\|_1  +\delta_{U^k} (u) -{z}^T(A_k x-u - b^k ) +\frac{\beta}{2} \| A_k x-u-b^k\|^2,
\end{align*}
where  $z$ is the Lagrange multiplier,  and
 $\beta$ denotes the penalty coefficient.  
 Then the basic process of solving \eqref{zproblem} in Algorithm \ref{algorithm1} via the proximal ADMM is shown in Algorithm~\ref{Algorithm2}.

\begin{algorithm}[H]
    \caption{Proximal ADMM for \eqref{zproblem}.}\label{Algorithm2}  
    \begin{algorithmic}[1]
             \State \textbf{Input:} Take $\beta>0$, $r\in \left(0,\frac{1+\sqrt{5}}{2}\right)$, $\rho>0$, and   $\left(x^{k,0}, u^{k,0}, z^{k,0}\right)$. Set $l=0$.               
            \While{the point $\left(x^{k,l},u^{k,l}\right)$ does not satisfy \eqref{zzzz1}, \eqref{zzzz2} and \eqref{zzzz3} }
            \State Compute
            \[
            x^{k,l+1}\! \in \!\Argmin\limits_{x\in \R^n} \left\{ \!L_{\beta} \left(x,u^{k,l};z^{k,l} \right) \!+\! \frac{1}{2}\left(x-x^{k,l}\right)^T \! \left(\rho I \!-\!\beta A_{k}^T A_k\right) \!\left(x \!-\!x^{k,l}\right)\!\right\}.
            \]
          
            \State Update $ u^{k,l+1}= \argmin\limits_{u\in \R^m} \left\{L_{\beta}(x^{k,l+1},u;z^{k,l})\right\}$, that is,  
            \[
             u^{k,l+1} =  \frac{\sqrt{\sigma_k}\left(A_kx^{k,l+1}-b^k- \frac{1}{\beta} z^{k,l} \right) }{\max\left\{ \sqrt{\sigma_k}, \|A_kx^{k,l+1}-b^k- \frac{ 1}{\beta} z^{k,l} \| \right\} } .
            \]

            \State Calculate $z^{k,l+1}=z^{k,l}- r\beta \left(A_kx^{k,l+1}-b^k-u^{k,l+1}\right)$ and set  $l=l+1$.
        \EndWhile  
    \end{algorithmic}
\end{algorithm}

\begin{remark}
\label{remarkforbanzhengding}
    We point out that in Algorithm \ref{Algorithm2}, $\rho=\bar{L}\beta >0$ where
    \[
    \bar{L}=\max\limits_{i} \left\{\phi_{+}' \left(\left(a_i^T x^k -b_i\right)^2\right)\right\}\lambda_{\max} \left(A^TA\right).
    \]
    Since $A$ has full row rank,  it follows that $\lambda_{\max} \left(A^TA\right)>0$. Then we have $\bar{L}\geq \lambda_{\max}(A_k^TA_k)$ and $\rho I -\beta A_k^TA_k \succeq 0$.
\end{remark}

\begin{remark}
    We present the details on the computation of $x^{k,l+1}$. 
Using the definition of $L_\beta$ and  \eqref{notation2}, the detailed update rule for $x^{k,l+1}$ is
\begin{equation*}
        x^{k,l+1} \in \Argmin\limits_{x\in \R^n}\sum\limits_{i=1}^{q}\left\{ \frac{({w}^{k}_\mathcal{G})_{i}}{\rho} \cdot \|x_{\mathcal{G}_i}\|+ \frac{1}{2}\|x_{\mathcal{G}_i}-v^{k,l}_{\mathcal{G}_i}\|^{2} \right\},
\end{equation*}
where ${v}^{k,l} =x^{k,l}-\frac{\beta}{\rho}A_k^{T} \left(A_kx^{k,l}-b^k-u^{k,l}-\frac{z^{k,l}}{\beta}\right)$. Using the proximal mapping of $\|\cdot\|$, we obtain that  $x^{k,l+1}=\left(\left(x_{\mathcal{G}_1}^{k,l+1}\right)^{T},\cdots,\left(x_{\mathcal{G}_q}^{k,l+1}\right)^{T}\right)^{T},$ where for $i=1,2,\cdots,q$,
            \begin{equation*}
	x_{\mathcal{G}_i}^{k,l+1}=\begin{cases}
			\left(1-\frac{\frac{({w}^k_{\mathcal{G}})_{i}}{\rho}}{\|v_{\mathcal{G}_i}^{k,l}\|}\right)\cdot v_{\mathcal{G}_i}^{k,l}, & {\rm if}\ \|x_{\mathcal{G}_i}^{k,l}\| \geq \frac{({w}^k_{\mathcal{G}})_{i}}{\rho},\\
			0 ,& {\rm otherwise}.
		\end{cases}
	\end{equation*}
\end{remark}

\begin{theorem}\label{Theoremforsubproblemkkt}
    Suppose that Assumption \ref{assum} hold, and that the sequences $\{x^{k,l}\}_{l\in\mathbb{N}}$ and   $\{u^{k,l}\}_{l\in\mathbb{N}}$ are generated by Algorithm \ref{Algorithm2}.
    Then for sufficiently large $l$, the  pair $\left(x^{k,l},u^{k,l}\right)$ satisfies conditions \eqref{zzzz1}-\eqref{zzzz3}. 
\end{theorem}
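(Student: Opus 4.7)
My plan is to invoke convergence of the semi-proximal ADMM (Algorithm \ref{Algorithm2}) for the convex two-block subproblem \eqref{zproblem}, and then use the resulting first-order residuals to verify each of the three inexact criteria individually. Since $\rho I - \beta A_k^T A_k \succeq 0$ by Remark \ref{remarkforbanzhengding} and $r \in (0,(1+\sqrt{5})/2)$ lies in the Fortin--Glowinski range, classical convergence results for semi-proximal ADMM on convex programs apply: the triple $(x^{k,l},u^{k,l},z^{k,l})$ converges to a KKT triple $(x^{k,*},u^{k,*},z^{k,*})$ of \eqref{zproblem}, and the successive differences $x^{k,l+1}-x^{k,l}$, $u^{k,l+1}-u^{k,l}$, $z^{k,l+1}-z^{k,l}$ all vanish as $l\to\infty$.

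The central step is to extract a member of the set $\partial(\omega_{\mathcal{G}}^k\circ G)(x^{k,l+1}) + A_k^T N_{U^k}(u^{k,l+1})$ from the first-order conditions of the two block updates. The $x$-subproblem gives
\begin{equation*}
A_k^T z^{k,l} - \beta A_k^T(A_k x^{k,l+1} - u^{k,l} - b^k) - (\rho I - \beta A_k^T A_k)(x^{k,l+1}-x^{k,l}) \in \partial(\omega_{\mathcal{G}}^k\circ G)(x^{k,l+1}),
\end{equation*}
and the $u$-subproblem (equivalently, the projection formula) gives
\begin{equation*}
\beta(A_k x^{k,l+1} - u^{k,l+1} - b^k) - z^{k,l} \in N_{U^k}(u^{k,l+1}).
\end{equation*}
Adding the first inclusion to $A_k^T$ times the second cancels the $A_k^T z^{k,l}$ terms and yields
\begin{equation*}
\beta A_k^T(u^{k,l}-u^{k,l+1}) - (\rho I - \beta A_k^T A_k)(x^{k,l+1}-x^{k,l}) \in \partial(\omega_{\mathcal{G}}^k\circ G)(x^{k,l+1}) + A_k^T N_{U^k}(u^{k,l+1}),
\end{equation*}
whose left-hand side vanishes in the Euclidean norm by the successive-difference property. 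Condition \eqref{zzzz2} is even more direct: the multiplier step rearranges to $A_k x^{k,l+1}-b^k-u^{k,l+1} = -(z^{k,l+1}-z^{k,l})/(r\beta)$, which tends to zero.

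For \eqref{zzzz1}, I would observe that since $f$ is continuously differentiable with $D^f(0,0)=0$, the map $v\mapsto D^f(v,0)$ is continuous at the origin; plugging in the vanishing residual above and using the infimum definition of $D^f$ on a set then yields $D^f(\partial(\omega_{\mathcal{G}}^k\circ G)(x^{k,l+1}) + A_k^T N_{U^k}(u^{k,l+1}),0)\to 0$, so the tolerance $\epsilon_k$ is eventually met. To verify \eqref{zzzz3}, I would use optimality of the limit: since $x^k\in\F_k$ by Remark \ref{feasiblility}, feasibility of $x^k$ in \eqref{zproblem} forces $\|\omega_{\mathcal{G}}^k\circ G(x^{k,*})\|_1 \leq \|\omega_{\mathcal{G}}^k\circ x_{\mathcal{G}}^k\|_1$; moreover $\|A_k x^{k,*}-b^k\|=\|u^{k,*}\|\leq\sqrt{\sigma_k}$ implies $x^{k,*}\in\F_k$ and so $P_k(x^{k,*})=x^{k,*}$. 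Continuity of $P_k$ on $\R^n$ and of $\|\omega_{\mathcal{G}}^k\circ G(\cdot)\|_1$ then give $\|\omega_{\mathcal{G}}^k\circ G(P_k(x^{k,l+1}))\|_1 \to \|\omega_{\mathcal{G}}^k\circ G(x^{k,*})\|_1 \leq \|\omega_{\mathcal{G}}^k\circ x_{\mathcal{G}}^k\|_1$, and \eqref{zzzz3} follows for all sufficiently large $l$ because $\mu_k>0$.

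The main obstacle will be handling the Bregman-distance form of the subdifferential criterion \eqref{zzzz1}, since the standard ADMM machinery produces only Euclidean residuals and one must transfer convergence into the generalised framework while keeping the dependence on the unspecified $f$ clean; the saving observation is that only the behaviour of $D^f(\cdot,0)$ at the origin matters, so continuity of $\nabla f$ suffices and no quantitative modulus is required. A secondary technical point is invoking proximal ADMM convergence cleanly with the over-relaxation parameter $r$ and the semidefinite (rather than positive definite) proximal matrix $\rho I - \beta A_k^T A_k$, which can be resolved by citing the standard extension of the Fortin--Glowinski analysis to the semi-proximal setting.
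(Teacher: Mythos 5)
Your proposal is correct and follows essentially the same route as the paper's proof: invoke convergence of the semi-proximal ADMM (the paper cites \cite[Theorem B.1]{Fazel2013}), combine the optimality conditions of the $x$- and $u$-updates to produce a vanishing Euclidean residual inside $\partial(\omega_{\mathcal{G}}^k\circ G)(x^{k,l+1})+A_k^TN_{U^k}(u^{k,l+1})$, read off \eqref{zzzz2} from the multiplier update, and obtain \eqref{zzzz3} from optimality of the limit together with feasibility of $x^k$ and the slack $\mu_k>0$. Your explicit remarks on the continuity of $v\mapsto D^f(v,0)$ at the origin and on $P_k(x^{k,\star})=x^{k,\star}$ are slightly more careful than the paper's corresponding assertions, but they do not change the argument.
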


\begin{proof}
  Before we proceed with the discussion, we first point out from \cite[Theorem B.1]{Fazel2013} that the sequence $\{x^{k,l}\}_{l\in\mathbb{N}}$ converges to an optimal solution $x^{k,\star}$ of subproblem~\eqref{zproblem}, and letting $l\to \infty$, we also have
\begin{align}\label{adjacent changes}
       & \|x^{k,l} -x^{k,l+1}\| \to 0,~\|z^{k,l} -z^{k,l+1}\| \to 0,~\|u^{k,l} -u^{k,l+1}\| \to 0.
\end{align}
Furthermore, by the definitions of $\omega_{\mathcal{G}}$ and $G(\cdot)$, we get
    \begin{align}\label{objective limit}
      \|\omega^{k}_{\mathcal{G}} \circ G(x^{k,l})\|_1 \to \|\omega^{k}_{\mathcal{G}} \circ G(x^{k,\star})\|_1.
    \end{align}
In fact, using the optimality conditions during the update of $x$ and $u$ in steps 3-4 of Algorithm \ref{Algorithm2}, we derive that
\begin{align}
\label{relation for 1314}
       \begin{cases}
       \displaystyle 0 \in  \omega^{k}_{\mathcal{G}} \circ \partial G (x^{k,l+1})-A_{k}^T z^{k,l} +\beta A_{k}^T \left(A_kx^{k,l+1} - b^k -u^{k,l}\right) \\
       ~~~~~+ \left(\rho I-\beta A_{k}^T A_k\right) \left(x^{k,l+1}-x^{k,l}\right),\\
      \displaystyle  0 \in N_{U^k} (u^{k, l+1}) +z^{k,l} -\beta \left(A_k x^{k,l+1} -b^k -u^{k, l+1}\right),\\
       \displaystyle  z^{k, l+1} =z^{k,l}- r\beta \left(A_kx^{k,l+1} -b^k -u^{k, l+1}\right),
       \end{cases}
\end{align}
the first two relations in \eqref{relation for 1314} implies that
\[
-\!\beta A_{k}^T\!\!\left(\!u^{k, l+1}\! -\!u^{k,l}\!\right)\!-\!\left(\!\rho I\!-\!\beta A_{k}^T A_k\!\right)\left(\!x^{k,l+1}\!-\!\!x^{k,l}\!\right)\!\in\! A_{k}^T N_{U^k} (\!u^{k, l+1}\!) \!+\!\omega^{k}_{\mathcal{G}}\!\circ \! \, \partial G (\!x^{k,l+1}\!).
\]
Combing this relation with \eqref{adjacent changes}, Remark \ref{remarkforbanzhengding} and the definition  $\epsilon_k$, for all sufficiently large $l$, we have
\begin{align*}
     & \|-\beta A_{k}^T \left(u^{k, l+1} -u^{k,l}\right) - \left(\rho I-\beta A_{k}^T A_k\right) \left(x^{k,l+1} -x^{k,l}\right)\| \\
    & \leq \|-\beta A_{k}^T\| \|u^{k, l+1} -u^{k,l}\| + \|\rho I-\beta A_{k}^T A_k\| \|x^{k,l+1} -x^{k,l}\| \leq \epsilon_k.
\end{align*}
Using the definition of $D_f$, this further implies that for all sufficiently large $l$,
\begin{align*}
       &D_f\left( \omega^{k}_{\mathcal{G}} \circ \partial   G   (x^{k,l+1}) +A_{k}^T N_{ U^k }(u^{k,l+1}),\; 0\right)\\
       &\leq D_f \left(-\beta A_{k}^T \left(u^{k, l+1} -u^{k,l}\right) -\left(\rho I-\beta A_{k}^T A_k\right) \left(x^{k,l+1} -x^{k,l}\right),\; 0 \right) \leq \epsilon_k,
\end{align*}
which confirms that criterion \eqref{zzzz1} holds. For criteria \eqref{zzzz2}, from the second relation in \eqref{adjacent changes} and the identity $\| A_kx^{k,l+1} -b^k -u^{k, l+1}\|=\frac{1}{r \beta} \| z^{k,l} - z^{k,l+1}\|$ (from the third relation in \eqref{relation for 1314}), we have that for all sufficiently large $l$, $\| A_kx^{k,l+1} -b^k -u^{k, l+1}\| \leq \epsilon_k$ holds. This means \eqref{zzzz2} holds.
   
It remains to verify criterion \eqref{zzzz3}. By use of \eqref{adjacent changes}, \eqref{objective limit} and the definition of $P_k$ in Algorithm \ref{algorithm1}, we have that $\lim\limits_{l\to \infty} \|\omega^{k}_{\mathcal{G}} \circ \left(P_k(x^{k,l+1})\right)_\mathcal{G} \|_1= \lim\limits_{l\to \infty} \|\omega^{k}_{\mathcal{G}} \circ x^{k,l+1}_\mathcal{G} \|_1 = \lim\limits_{l\to \infty} \|\omega^{k}_{\mathcal{G}} \circ x^{k,l}_\mathcal{G} \|_1 = \|\omega^{k}_{\mathcal{G}} \circ x^{k,\star}_\mathcal{G}\|_1$. This together with $\mu_k >0$ implies that for sufficiently large $l$,
\[
\|\omega^{k}_{\mathcal{G}} \circ \left(P_k(x^{k,l+1})\right)_\mathcal{G} \|_1 < \|\omega^{k}_{\mathcal{G}} \circ x^{k,\star}_\mathcal{G}\|_1 + \mu_k.
\]
Note that $x^k$ is feasible for the subproblem \eqref{zproblem} and $x^{k,\star}$ is an optimal solution of \eqref{zproblem}, so $\|\omega^{k}_{\mathcal{G}} \circ x^{k,\star}_\mathcal{G}\|_1 \leq \|\omega^{k}_{\mathcal{G}} \circ {x}^k_\mathcal{G}\|_1 $. Thus, we obtain that
\begin{align*}
	\|\omega^{k}_{\mathcal{G}} \circ \left(P_k(x^{k,l+1})\right)_\mathcal{G} \|_1 + \mu_k < \|\omega^{k}_{\mathcal{G}} \circ {x}^k_\mathcal{G}\|_1 +\mu_k,
\end{align*}
 which yields that criteria \eqref{zzzz3} holds.
 \end{proof}
 
\section{Convergence analysis}\label{section4}
We  establish the following proposition through a similar proof as demonstrated in \cite[Proposition 3.1]{SSQTKP23}. Here we omit the proof for brevity.
\begin{proposition}\label{proposition}
Suppose Assumptions \ref{assum} and the conditions in Fact \ref{implyMFCQ} hold.
    Consider Algorithm \ref{algorithm1}  to solve problem \eqref{Problem}. Let $\{x^k\}_{k\in\mathbb{N}}$ and $\{\tilde{x}^k\}_{k\in\mathbb{N}}$,    be generated by Algorithm \ref{algorithm1}. Then  the following statements hold:
\begin{enumerate}[\rm (i)]
\item The sequences $\{x^k\}_{k\in\mathbb{N}}$, $\{\tilde{x}^k\}_{k\in\mathbb{N}}$, $\{x_{\mathcal{G}}^k\}_{k\in\mathbb{N}}$, $\{\tilde{x}_{\mathcal{G}}^k\}_{k\in\mathbb{N}}$ are bounded, and for all $k$, the following inequality holds
\begin{equation*}
    \Psi\left(x_{\mathcal{G}}^{k+1}\right)-\Psi \left(x_{\mathcal{G}}^k \right) \leq \mu_k.
\end{equation*}
 Furthermore, the sequence $\left\{\Psi \left(x_{\mathcal{G}}^{k}\right)\right\}_{k\in\mathbb{N}}$ is convergent.
\item There exists constant $M>0$ such that for all $k$,
\begin{align*}
    \|x_{\mathcal{G}}^{k+1}-\tilde{x}_{\mathcal{G}}^{k+1}\|
     \leq \| \left(x^{k+1}-\tilde{x}^{k+1} \right)_\mathcal{G}\|
     = \| x^{k+1}-\tilde{x}^{k+1} \|
    &\leq \sqrt{\epsilon_k}M.
\end{align*}
\item  $\lim\limits_{k\to \infty}\|\tilde{x}_{\mathcal{G}}^{k+1}-x_{\mathcal{G}}^k\|=0$.
\end{enumerate}
\end{proposition}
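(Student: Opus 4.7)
The plan is to prove the three parts in order, since (ii) relies on the boundedness from (i) and (iii) relies on both.

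For part (i), by induction from Remark \ref{feasiblility}(i) and (iii), every $x^k$ is feasible for \eqref{Problem}. The heart of the argument is a descent inequality for $\Psi(x^k_{\mathcal{G}})$: concavity of $\Psi$ on $\R^q_{+}$ together with $\omega^k_{\mathcal{G}} = \Psi'_+(x^k_{\mathcal{G}})$ yields $\Psi(x^{k+1}_{\mathcal{G}}) - \Psi(x^k_{\mathcal{G}}) \leq \langle \omega^k_{\mathcal{G}}, x^{k+1}_{\mathcal{G}} - x^k_{\mathcal{G}} \rangle$, and since $\omega^k_{\mathcal{G}}$, $x^k_{\mathcal{G}}$, $x^{k+1}_{\mathcal{G}}$ are entrywise nonnegative, criterion \eqref{zzzz3} applied with $x^{k+1} = P_k(\tilde{x}^{k+1})$ bounds the right-hand side by $\mu_k$. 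Summability of $\{\mu_k\}$ and $\Psi \geq 0$ then yield convergence of $\{\Psi(x^k_{\mathcal{G}})\}$. Boundedness of $\{x^k_{\mathcal{G}}\}$ follows from $\lim_{t\to\infty}\psi(t) = \infty$ in Assumption \ref{assum}(i), and $\|x^k\| = \|x^k_{\mathcal{G}}\|$ extends this to $\{x^k\}$; the bounds on $\{\tilde{x}^k\}$ and $\{\tilde{x}^k_{\mathcal{G}}\}$ drop out of (ii).

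For part (ii), the first inequality uses the reverse triangle inequality on each block, and the middle equality uses the partition property of $\{\mathcal{G}_i\}$, both recorded in Section \ref{section2}. The main task is thus the final estimate. If $\tilde{x}^{k+1} \in \mathfrak{F}_k$ the difference vanishes; otherwise $x^{k+1} = (1-\alpha) A^\dagger b + \alpha \tilde{x}^{k+1}$ with $\alpha = \sqrt{\sigma_k}/\|A_k\tilde{x}^{k+1}-b^k\|$, and the triangle inequality combined with \eqref{zzzz2} and $\tilde{u}^{k+1} \in U^k$ gives $\|A_k\tilde{x}^{k+1}-b^k\| \leq \sqrt{\sigma_k}+\epsilon_k$. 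Hence $1 - \alpha \leq \epsilon_k/\sqrt{\sigma_k} \leq \sqrt{\epsilon_k}$, using $\epsilon_k \leq \sigma_k$, so $\|x^{k+1} - \tilde{x}^{k+1}\| \leq \sqrt{\epsilon_k}\,\|A^\dagger b - \tilde{x}^{k+1}\|$. Rearranging this bound with $\epsilon_k \to 0$ and the boundedness of $\{x^{k+1}\}$ from (i) gives uniform boundedness of $\{\tilde{x}^{k+1}\}$, so $M := \sup_k \|A^\dagger b - \tilde{x}^{k+1}\|$ is finite.

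For part (iii), I would argue by subsequences and strict concavity. Along any subsequence with $x^{k_j}_{\mathcal{G}} \to z^*$ and $\tilde{x}^{k_j+1}_{\mathcal{G}} \to y^*$ (possible by the boundedness just established), (ii) combined with $\epsilon_k \to 0$ gives $x^{k_j+1}_{\mathcal{G}} \to y^*$, and convergence of $\{\Psi(x^k_{\mathcal{G}})\}$ forces $\Psi(y^*) = \Psi(z^*)$. Continuity of $\psi'_+$ on $[0,\infty)$ yields $\omega^{k_j}_{\mathcal{G}} \to \Psi'_+(z^*)$. Passing to the limit in the concavity inequality $\Psi(\tilde{x}^{k+1}_{\mathcal{G}}) \leq \Psi(x^k_{\mathcal{G}}) + \langle \omega^k_{\mathcal{G}}, \tilde{x}^{k+1}_{\mathcal{G}} - x^k_{\mathcal{G}} \rangle$, and controlling its right-hand side via \eqref{zzzz3} together with the vanishing term $\langle \omega^k_{\mathcal{G}}, \tilde{x}^{k+1}_{\mathcal{G}} - x^{k+1}_{\mathcal{G}} \rangle$ from (ii), shows that this inequality becomes an equality in the limit. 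Strict concavity of $\Psi$ (Assumption \ref{assum}(i)) then forces $y^* = z^*$, and since this holds for every convergent subsequence of the bounded sequence $\{\tilde{x}^{k+1}_{\mathcal{G}} - x^k_{\mathcal{G}}\}$, the whole sequence tends to $0$. The main obstacle will be exactly this two-sided squeeze in (iii): concavity supplies the "$\leq$" direction for free, but forcing equality requires carefully combining \eqref{zzzz3} with the $O(\sqrt{\epsilon_k})$ bound from (ii) in order to control the inner product from above at the limit.
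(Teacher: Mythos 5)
Your proposal is correct and follows essentially the route the paper intends: the paper omits this proof, deferring to \cite[Proposition~3.1]{SSQTKP23}, and your argument --- the descent estimate from \eqref{zzzz3} plus concavity of $\Psi$ for (i), the convex-combination bound on the retraction $P_k$ combined with $\epsilon_k\leq\sigma_k$ for (ii), and the subsequence-plus-strict-concavity squeeze for (iii) --- is precisely the group-structured adaptation of that proof. I see no gaps; in particular you correctly avoid circularity by establishing the bound $\|x^{k+1}-\tilde{x}^{k+1}\|\leq\sqrt{\epsilon_k}\,\|A^{\dagger}b-\tilde{x}^{k+1}\|$ before using it to deduce boundedness of $\{\tilde{x}^{k}\}_{k\in\mathbb{N}}$.
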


\begin{remark}\label{boundedness}
     By Proposition \ref{proposition} (i), the sequences $\{\omega_{\mathcal{G}}^k\}_{k\in\mathbb{N}}$, $\{A_k\}_{k\in\mathbb{N}}$, $\{b^k\}_{k\in\mathbb{N}}$ and $\{\sigma_k\}_{k\in\mathbb{N}}$ generated by Algorithm \ref{algorithm1}  are all bounded, and hence the sequence $\{A_k \tilde{x}^{k+1}\}_{k\in\mathbb{N}}$ is also bounded.
\end{remark}

To show the convergence analysis of Algorithm \ref{algorithm1}, we first note from \eqref{zzzz1} that for each $k$, there exists  $\xi^k \in \R^n$ with $D^f \left(\xi^k,0\right)\leq \epsilon_k$ such that
\begin{equation*}
   \xi^k \in \omega_{\mathcal{G}}^k \circ \partial G \left(\tilde{x}^{k+1}\right) + A_{k}^T N_{U^k} \left(\tilde{u}^{k+1})\right),
\end{equation*}
which, combined with $\lim\limits_{k\rightarrow \infty}\epsilon_k= 0$, implies $\lim\limits_{k\rightarrow \infty} D^f \left(\xi^k,0\right)= 0$. This together with Fact \ref{helptoxi0} yields
\begin{equation}\label{xi2}
\lim_{k\to\infty}\|\xi^k\|=0.
\end{equation}
Using the definition of normal cone and the fact that $\sigma_k>0$, we further derive the existence of $\tilde{\lambda}_k \geq 0$ satisfying
\begin{align}\label{normal cone}
      \xi^k \in \omega_{\mathcal{G}}^k \circ \partial G (\tilde{x}^{k+1}) +\tilde{\lambda}_k A_k^T  \tilde{u}^{k+1}\quad {\rm and} \quad \tilde{\lambda}_k \left(\|\tilde{u}^{k+1}\|^2 -\sigma_k\right) = 0.
 \end{align}
Define 
\begin{equation}\label{def_v^k}
\tilde{v}^{k+1}=A_k \tilde{x}^{k+1} - b^k -\tilde{u}^{k+1}.
\end{equation}
By \eqref{zzzz2}, we have $\|\tilde{v}^{k+1}\| \leq \epsilon_k \leq \min \left\{ \sigma_k,\sqrt{\sigma_k}\right\}$. Furthermore, from \eqref{normal cone} it follows that
\begin{align}\label{argmin}
      \tilde{x}^{k+1} &\in \Argmin\limits_{x\in \R^n} \left\{(\omega_{\mathcal{G}}^k)^T G(x) - (\xi^k)^Tx + \frac{1}{2} \tilde{\lambda}_k \left(\|A_k x-b^k-\tilde{v}^{k+1}\|^2\right)\right\},
 \end{align}
and
\begin{align}\label{bianjie}
     \tilde{\lambda}_k \left(\|A_k \tilde{x}^{k+1}-b^k-\tilde{v}^{k+1}\|^2-\sigma_k\right) = 0.
 \end{align}

We are now ready to establish the subsequential convergence of the iterates generated by Algorithm  \ref{algorithm1}, following an argument analogous to that in \cite[Theorem 3.1]{SSQTKP23}. For completeness and self-containment, we present the full proof below.
\begin{theorem}\label{theorem}
    Consider \eqref{Problem} under Assumptions \ref{assum}, Facts \ref{helptoxi0} and \ref{implyMFCQ}.  Let $\{x^k\}_{k\in\mathbb{N}}$, $\{\tilde{x}^k\}_{k\in\mathbb{N}}$, $\{x_{\mathcal{G}}^k\}_{k\in\mathbb{N}}$ and $\{\tilde{x}_{\mathcal{G}}^k\}_{k\in\mathbb{N}}$ be generated by Algorithm \ref{algorithm1}, and let $\{\tilde{\lambda}_k\}_{k\in\mathbb{N}}$ be defined in \eqref{normal cone}. Then the following statements hold:
\begin{enumerate}[\rm (i)]
\item $\mathop{\lim\inf}\limits_{k \to \infty}\tilde{\lambda}_k >0$.
\item $\lim\limits_{k \to \infty}\phi_{+}'\left( \left(a_i^T x^k -b_i \right)^2 \right) \left(a_i^T \left(\tilde{x}^{k+1}-x^k\right) \right) = 0$ for all $i$.
\item $\Phi \left((Ax^{*}-b)\circ (Ax^{*}-b) \right) = \sigma$ for every accumulation point $x^{*}$ of $\{x^k\}_{k\in\mathbb{N}}$.
\item The sequence $\left\{\tilde{\lambda}_k\right\}_{k\in\mathbb{N}}$ is bounded.
\item Every accumulation point of $\left\{x^k\right\}_{k\in\mathbb{N}}$ is a stationary point of \eqref{Problem}.
\end{enumerate}
\end{theorem}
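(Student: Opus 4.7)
My plan is to follow the roadmap of the analogous \cite[Theorem 3.1]{SSQTKP23}, proving items (i)--(v) in that order, with modifications that accommodate the group-structured operator $G$ appearing through \eqref{subd_norm2} and the Bregman-distance-based inexact stopping criterion.

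For (i), I would argue by contradiction. Suppose $\tilde\lambda_{k_j}\to 0$ along a subsequence. By Proposition \ref{proposition}(i) and Remark \ref{boundedness}, extract further convergent subsequences $x^{k_j}\to x^*$ and $\tilde x^{k_j+1}\to\tilde x$. In the variational characterization \eqref{argmin}, for every fixed competitor $x'\in\R^n$, the quadratic penalty $\frac{\tilde\lambda_{k_j}}{2}\|A_{k_j}x'-b^{k_j}-\tilde v^{k_j+1}\|^2$ stays uniformly bounded (by Remark \ref{boundedness} and $\|\tilde v^{k_j+1}\|\leq\epsilon_{k_j}$), hence vanishes as $\tilde\lambda_{k_j}\to 0$, while the correction $(\xi^{k_j})^Tx'$ vanishes by \eqref{xi2}. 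The limit inequality $(\omega^*_{\mathcal G})^TG(\tilde x)\leq(\omega^*_{\mathcal G})^TG(x')$ for every $x'\in\R^n$, together with the componentwise positivity of $\omega^*_{\mathcal G}=\Psi_+'(x^*_{\mathcal G})$, forces $\tilde x=0$. Proposition \ref{proposition}(iii) then gives $\|x^{k_j}_{\mathcal G_i}\|\to 0$ for every $i$, i.e.\ $x^{k_j}\to 0$, and passing the feasibility $\Phi(y^{k_j}\circ y^{k_j})\leq\sigma$ to the limit produces $\Phi(b\circ b)\leq\sigma$, contradicting Assumption \ref{assum}(iii).

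Having (i), I turn to (ii). Eventual positivity of $\tilde\lambda_k$ together with \eqref{bianjie} and \eqref{def_v^k} yields $\|A_k\tilde x^{k+1}-b^k\|^2=\sigma_k+O(\epsilon_k)$, from which the definition of $\sigma_k$ in \eqref{notation2} delivers
\[
\sum_{i=1}^m\phi_+'((y^k_i)^2)\bigl[(a_i^T\tilde x^{k+1}-b_i)^2-(y^k_i)^2\bigr]=\sigma-\Phi(y^k\circ y^k)+O(\epsilon_k).
\]
Expanding each difference of squares as
\[
(a_i^T\tilde x^{k+1}-b_i)^2-(y^k_i)^2=[a_i^T(\tilde x^{k+1}-x^k)]^2+2y^k_i\,a_i^T(\tilde x^{k+1}-x^k)
\]
and combining with the block-norm control of Proposition \ref{proposition}(iii) and the nonnegative feasibility slack $\sigma-\Phi(y^k\circ y^k)\geq 0$, I would show that the weighted quadratic contribution $\|A_k(\tilde x^{k+1}-x^k)\|^2=\sum_i\phi_+'((y^k_i)^2)[a_i^T(\tilde x^{k+1}-x^k)]^2$ must vanish in the limit. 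A Cauchy--Schwarz step with the bounded factor $\phi_+'((y^k_i)^2)^{1/2}$ then converts this row-wise quadratic decay into (ii). For (iii), I would argue by contradiction: if $\Phi(y^*\circ y^*)<\sigma$ along some $x^{k_j}\to x^*$, the strict slack at $x^{k_j}$ supplies an interior Slater-type competitor in \eqref{zproblem} whose optimality, coupled with (ii), forces $\tilde\lambda_{k_j}\to 0$, contradicting (i).

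For (iv), again by contradiction, suppose $\tilde\lambda_{k_j}\to\infty$. Dividing \eqref{normal cone} by $\tilde\lambda_{k_j}$, boundedness of $\omega^{k_j}_{\mathcal G}\circ\partial G(\tilde x^{k_j+1})$ (from \eqref{subd_norm2}) and $\xi^{k_j}/\tilde\lambda_{k_j}\to 0$ let the limit yield $A_*^T\hat u=0$ with $\hat u=\upsilon^*\circ(Ax^*-b)$ and $\|\hat u\|^2=\sigma_*>0$ (using (iii)); rewriting via $A_*=\Diag(\upsilon^*)A$ produces $\sum_i\phi_+'((y^*_i)^2)(a_i^Tx^*-b_i)a_i=0$, contradicting MFCQ \eqref{def:MFCQ} via Fact \ref{implyMFCQ}. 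Finally for (v), select a sub-subsequence realising $\tilde x^{k_j+1}\to x^*$, $x^{k_j+1}\to x^*$ (by Proposition \ref{proposition}(ii)), $\tilde\lambda_{k_j}\to\lambda^*>0$, and $a_i^Tx^{k_j}\to a_i^Tx^*$ for each $i$ (from (ii)); outer semicontinuity of $\partial G$ and continuity of $\Psi_+'$ and $\phi_+'$ permit passing \eqref{normal cone} to the limit, with (ii) collapsing $\tilde\lambda_kA_k^T\tilde u^{k+1}$ into $2\lambda^*\sum_i\phi_+'((a_i^Tx^*-b_i)^2)(a_i^Tx^*-b_i)a_i$; the limit matches \eqref{kkt3}, while (iii) supplies \eqref{kkt1}--\eqref{kkt2}. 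The main obstacle is (ii): Proposition \ref{proposition}(iii) controls only the block norms, so upgrading to the per-row decay required in (ii), and to the companion identity $a_i^T(\tilde x^{k+1}-x^k)\to 0$ used in (v), demands the weighted control $\|A_k(\tilde x^{k+1}-x^k)\|\to 0$ extracted from the tight complementarity in (i), whose clean derivation from the expanded feasibility identity is the most delicate step of the argument.
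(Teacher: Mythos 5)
Your overall roadmap (contradiction for (i), (iii), (iv); limit passage in the optimality inclusion for (v)) matches the paper's, and items (i), (iii), (iv) are argued essentially as in the paper's proof (your use of the argmin characterization \eqref{argmin} in (i) in place of the subdifferential inclusion \eqref{normal cone} is a cosmetic variant). The proof has a genuine gap, however, and it sits exactly where you flag the delicacy: item (ii).

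Your derivation of (ii) starts from tightness of the subproblem constraint, $\|A_k\tilde x^{k+1}-b^k\|^2=\sigma_k+O(\epsilon_k)$, substitutes the definition of $\sigma_k$ from \eqref{notation2}, and expands
\begin{equation*}
(a_i^T\tilde x^{k+1}-b_i)^2-(y^k_i)^2=\bigl[a_i^T(\tilde x^{k+1}-x^k)\bigr]^2+2y^k_i\,a_i^T(\tilde x^{k+1}-x^k),
\end{equation*}
hoping to conclude that $\|A_k(\tilde x^{k+1}-x^k)\|^2\to 0$. This cannot work: the identity only pins down the \emph{sum} of the nonnegative quadratic term and the sign-indefinite cross term $2\sum_i\phi_+'((y^k_i)^2)\,y^k_i\,a_i^T(\tilde x^{k+1}-x^k)$ against the nonnegative slack $\sigma-\Phi(y^k\circ y^k)$, so a large positive quadratic term can be cancelled by a large negative cross term. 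The cross term is not controlled by Proposition \ref{proposition}(iii), which only gives $\|\tilde x^{k+1}_{\mathcal G}-x^k_{\mathcal G}\|\to 0$, i.e.\ convergence of the \emph{differences of block norms}, not of $\tilde x^{k+1}-x^k$ itself; indeed the paper explicitly remarks in its conclusion that $x^{k+1}-x^k\to 0$ is not available. The paper's actual mechanism is different: it introduces the retracted competitor $\hat x^k=t_kx^k+(1-t_k)A^{\dagger}b$ with $t_k\to 1$ (so that $\hat x^k$ is feasible for the shifted constraint $\|A_k\hat x^k-b^k-\tilde v^{k+1}\|^2\le\sigma_k$), plugs it into the argmin property \eqref{argmin} together with the complementarity \eqref{bianjie} to obtain the key inequality \eqref{fangshuo}, and then combines this with the concavity bound $\Psi(x^{k+1}_{\mathcal G})\le\Psi(x^{k}_{\mathcal G})+(\omega^k_{\mathcal G})^T(x^{k+1}_{\mathcal G}-x^k_{\mathcal G})$ and the convergence of $\{\Psi(x^k_{\mathcal G})\}$ to squeeze $\tilde\lambda_k\|A_k(\tilde x^{k+1}-\hat x^k)\|^2\to 0$; item (i) then strips off $\tilde\lambda_k$. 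Since your (iv) and (v) both invoke (ii), the gap propagates; in addition, your limit passage in (v) glosses over the fact that (ii) only transfers $a_i^T\tilde x^{k+1}-a_i^Tx^k\to 0$ on the index set where $\phi_+'$ stays bounded away from zero, which is why the paper needs the case analysis on the set $I$ leading to \eqref{relation conclusion}.
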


\begin{proof} We compelte the proof one by one.

  (i): If, on the contrary, $\mathop{\lim\inf}\limits_{k \to \infty}\tilde{\lambda}_k =0$, then there exists a subsequence $\{k_t\}_{k_t\in\mathbb{N}}$ of $\{k\}_{k\in\mathbb{N}}$ such that $\lim\limits_{t\to \infty}\tilde{\lambda}_{k_t} = 0$. Furthermore, by Proposition \ref{proposition}(i) and (iii), we also have $\lim\limits_{t\to \infty}x_{\mathcal{G}}^{k_t} = x_{\mathcal{G}}^{*}$ and $\lim\limits_{t\to \infty}\tilde{x}_{\mathcal{G}}^{k_t +1} = x^{*}_{\mathcal{G}}$. Let $\{\xi^{k_t}\}_{k_t\in\mathbb{N}}$ satisfy
\begin{align}\label{xi1}
     \xi^{k_t} &\in \omega_{\mathcal{G}}^{k_t} \circ \partial G(\tilde{x}^{{k_t}+1}) +\tilde{\lambda}_{k_t} A_{k_t}^T  \tilde{u}^{{k_t}+1}.
 \end{align}
Then by the upper semicontinuity of $\partial G$ and taking limits as $t \to \infty$ in \eqref{xi1}, we have from \eqref{xi2} and Remark \ref{boundedness} that
 \begin{align*}
     0 &\in \Psi_{+}' \left(x_{\mathcal{G}}^{*} \right)\circ \partial G(x^{*}).
 \end{align*}
 Thus, by using \eqref{subd_norm2} and the nonnegativity of 
 $\psi_{+}'$, we further have $x^{*}=0$, which means $\lim\limits_{t\to\infty} x^{k_t}=0$. Since $x^{k_t}\in \mathfrak{F}$ in view of Remark \ref{feasiblility} (iii), we have from the closedness of $\mathfrak{F}$ that $0\in\mathfrak{F}$, which is a contradiction to Assumption \ref{assum}(iii). Thus,  $\mathop{\lim\inf}\limits_{k \to \infty}\tilde{\lambda}_k >0$.

(ii): Using the same argument as in the proof of \cite[Theorem 3.1(ii)]{SSQTKP23}, we have that for each $k$, there exists $t_k \in \left[0,1\right]$ such that $\hat{x}^k = t_k x^k + \left(1-t_k\right) A^{\dagger}b$ satisfies $\lim\limits_{k\to\infty}t_k=1$ and $\lim\limits_{k\to\infty}\|\hat{x}^k-x^k\|=0$.
This together with
 \begin{align*}
         0 \leq \|\hat{x}^k_{\mathcal{G}} -x^k_{\mathcal{G}}\| \leq \| \left(\hat{x}^k -x^k\right)_{\mathcal{G}}\| = \|\hat{x}^k -x^k\|,
 \end{align*}
gives
\begin{equation}\label{hatx1}
\lim_{k\to\infty}\| \hat{x}_{\mathcal{G}}^k -{ x_{\mathcal{G}}^k}\| =0.
\end{equation}
Furthermore, we see from the definition of $\hat{x}^k$ and $A_k$ that
\begin{equation}\label{hatx2}
\|A_k \hat{x}^k-b^k- \tilde{v}^{k+1}\|^2 -\sigma_k\leq 0.
\end{equation}

We now   claim that
\begin{align}\label{fangshuo}
      (\omega_{\mathcal{G}}^k)^T \left(\tilde{x}^{k+1}_{\mathcal{G}} - \hat{x}^k_{\mathcal{G}} \right)  &\leq (\xi^k)^T \left(\tilde{x}^{k+1} - \hat{x}^k\right) - \frac{1}{2}\tilde{\lambda}_k \|A_k \tilde{x}^{k+1} -A_k \hat{x}^k\|^2.
 \end{align}
In fact, it follows that
\begin{align*}
       & (\omega_{\mathcal{G}}^k)^T \tilde{x}^{k+1}_{\mathcal{G}} -(\xi^k)^T \tilde{x}^{k+1} \\
       = & \; (\omega_{\mathcal{G}}^k)^T \tilde{x}^{k+1}_{\mathcal{G}} -(\xi^k)^T \tilde{x}^{k+1} +\frac{1}{2}\tilde{\lambda}_k \left(\|A_k \tilde{x}^{k+1}-b^k- \tilde{v}^{k+1}\|^2 -\sigma_k\right)\\
       \leq  \;  &(\omega_{\mathcal{G}}^k)^T \hat{x}^k_{\mathcal{G}} -(\xi^k)^T \hat{x}^k +\frac{1}{2} \tilde{\lambda}_k \left(\|A_k \hat{x}^k-b^k- \tilde{v}^{k+1}\|^2 -\sigma_k\right) \\
       &~~~-\frac{1}{2} \tilde{\lambda}_k \|A_k \tilde{x}^{k+1}-A_k \hat{x}^k\|^2\\
       \leq  \; &(\omega_{\mathcal{G}}^k)^T \hat{x}^k_{\mathcal{G}} -(\xi^k)^T \hat{x}^k - \frac{1}{2}\tilde{\lambda}_k \|A_k \tilde{x}^{k+1} -A_k \hat{x}^k\|^2,
 \end{align*}
 where the equality follows from \eqref{bianjie}, the first inequality follows from \eqref{argmin} and the second inequality holds because of \eqref{hatx2}. Rearranging the above inequality yields the assertation.
 
Now, by  the concavity of $\psi$ and the definition of $\{\omega^k_{\mathcal{G}}\}_{k\in\mathbb{N}}$ in \eqref{notation2}, we have
 \begin{align*}
     \Psi \left(x^{k+1}_{\mathcal{G}}\right) &\leq \Psi \left(x^{k}_{\mathcal{G}}\right) +(\omega^k_{\mathcal{G}})^T \left(x^{k+1}_{\mathcal{G}}- \tilde{x}^{k+1}_{\mathcal{G}} +\tilde{x}^{k+1}_{\mathcal{G}} - \hat{x}^{k}_{\mathcal{G}} +\hat{x}^{k}_{\mathcal{G}} -x^{k}_{\mathcal{G}} \right)\\
     &\leq \Psi \left(x^{k}_{\mathcal{G}}\right) + (\omega^k_{\mathcal{G}})^T \left(x^{k+1}_{\mathcal{G}}- \tilde{x}^{k+1}_{\mathcal{G}}\right) + (\xi^k)^T \left(\tilde{x}^{k+1} - \hat{x}^k\right)\\
     &~~~-\frac{1}{2}\tilde{\lambda}_k \|A_k \tilde{x}^{k+1} -A_k \hat{x}^k\|^2 + (\omega^k_{\mathcal{G}})^T  \left(\hat{x}^{k}_{\mathcal{G}} -x^{k}_{\mathcal{G}}\right),
 \end{align*}
 where the last inequality uses \eqref{fangshuo}. Rearranging the above inequality gives that

     \begin{align}\label{key ii}
    & 0 \leq \frac{1}{2} \tilde{\lambda}_k \|A_k \tilde{x}^{k+1} -A_k \hat{x}^k\|^2 \\
     &\leq \Psi \!\left(x^{k}_{\mathcal{G}}\right)\!- \!\Psi\!\left(x^{k+1}_{\mathcal{G}}\right) \!+\! (\omega^k_{\mathcal{G}})^T\! \!\left(x^{k+1}_{\mathcal{G}}\!- \!\tilde{x}^{k+1}_{\mathcal{G}}\right) \notag \!+\! (\xi^k)^T\! \!\left(\tilde{x}^{k+1}\! - \!\hat{x}^k\right)\! + \!(\omega^k_{\mathcal{G}})^T\!\!  \left(\hat{x}^{k}_{\mathcal{G}}\! -\!x^{k}_{\mathcal{G}}\right).\notag
     \end{align}
Note that the convergence of \(\left\{\Psi(x^k_{\mathcal{G}})\right\}_{k\in\mathbb{N}}\) (Proposition \ref{proposition}(i)) gives 
\[
\lim\limits_{k\to \infty}\left(\Psi(x^{k}_{\mathcal{G}})- \Psi(x^{k+1}_{\mathcal{G}})\right)=0,
\]
while \eqref{xi2} and the boundedness of \(\{\tilde{x}^{k+1}\}_{k\in\mathbb{N}}\) and  \(\{ \hat{x}^k \}_{k\in\mathbb{N}}\) imply  
\[
\lim\limits_{k\to \infty}(\xi^k)^T \left(\tilde{x}^{k+1} - \hat{x}^k\right)=0. 
\]
Combining these two displays, the boundedness of $\{A_k\}_{k\in\mathbb{N}}$ and $\{\omega^k_{\mathcal{G}}\}_{k\in\mathbb{N}}$, \eqref{hatx1}, Proposition \ref{proposition}(ii) and Theorem \ref{theorem}(i) with \eqref{key ii}, we obtain that 
\[
\lim\limits_{k\to \infty} \|A_k \left(\tilde{x}^{k+1} - \hat{x}^k\right)\|=0.
\]  
By using the boundedness of $\{A_k\}_{k\in\mathbb{N}}$ and \eqref{hatx1}, we further have
 \begin{align*}
     \lim\limits_{k\to \infty} \|A_k \left(\tilde{x}^{k+1} - x^k\right)\| =0.
 \end{align*}
This together with the use of the notations in \eqref{notation2} gives
\begin{align*}
     \lim\limits_{k \to \infty} \sqrt{\phi_{+}'\left(\left(a_i^T x^k -b_i\right)^2\right)} \left(a_i^T\left(\tilde{x}^{k+1}-x^k\right)\right)=0 \quad  ~\forall i.
 \end{align*}
Thus, combined with the boundedness of $\{x^k\}_{k\in\mathbb{N}}$ and the continuity of $\phi_{+}'$, it yields
 \begin{align*}
     \lim\limits_{k \to \infty} \phi_{+}' \left(\left(a_i^T x^k -b_i\right)^2\right) \left(a_i^T\left(\tilde{x}^{k+1}-x^k\right)\right)=0 \quad  ~\forall i.
 \end{align*}

(iii): The proof of this part is the same as that of \cite[Theorem 3.1(iii)]{SSQTKP23}, and thus is omitted herein.

(iv): Suppose, to the contrary, that $\left\{\tilde{\lambda}_k\right\}_{k\in\mathbb{N}}$ is unbounded. Then there exists a subsequence $\left\{\tilde{\lambda}_{k_t}\right\}_{k_t\in\mathbb{N}}$ such that $\lim\limits_{t\to\infty}\tilde{\lambda}_{k_t}=\infty$. In view of the boundedness of $\{x^k\}_{k\in\mathbb{N}}$ that is derived from Proposition \ref{proposition}(i), by passing to a further subsequence if necessary, we can assume $\lim\limits_{t\to \infty}x^{k_t}=x^{*}$ for some $x^{*} \in \R^n$ without loss of generality. Now, by use of the first relationship in \eqref{normal cone}, we obtain
\begin{align}\label{display}
        \frac{\xi^{k_t}}{\tilde{\lambda}_{k_t}}\in \frac{\omega^{k_t}_{\mathcal{G}}\circ \partial G(\tilde{x}^{k_t+1})}{\tilde{\lambda}_{k_t}} + A_{k_t}^T \tilde{u}^{k_t+1}.
\end{align}
Since $\partial G(\cdot)$ is contained in the closed unit ball of $\R^n$ and  $\left\{\omega^{k_t}_{\mathcal{G}}\right\}_{k_t\in\mathbb{N}}$ is bounded in view of Remark \ref{boundedness}, by passing the limit as $t \to \infty$ on both sides of \eqref{display}, we conclude from \eqref{xi2} that  $\lim\limits_{t\to\infty} A_{k_t}^T \tilde{u}^{k_t+1}=0$. This together with the definition of $\tilde v^k$ for all $k$ in \eqref{def_v^k} implies 
$       \lim\limits_{t\to\infty} \left[A_{k_t}^T \left(A_{k_t}\tilde{x}^{k_t+1}-b^{k_t}\right) - A_{k_t}^T \tilde{v}^{k_t+1}\right]=0$.
 Since $\{A_{k_t}\}_{k_t\in\mathbb{N}}$ is bounded and $\lim\limits_{t\to\infty}\tilde{v}^{k_t+1} = 0$ by virtue of  Remark \ref{boundedness} and \eqref{zzzz2} respectively, we further derive  \begin{equation}\label{ivuv1}
       \lim\limits_{t\to\infty} A_{k_t}^T \left(A_{k_t}\tilde{x}^{k_t+1}-b^{k_t}\right) = 0.
 \end{equation}
In view of \eqref{notation2}, we rewrite \eqref{ivuv1} as $      \lim\limits_{t\to\infty} \sum\limits_{i=1}^m \phi_{+}' \left(\left(a_i^T x^{k_t} -b_i\right)^2\right) \left(a_i^T \tilde{x}^{k_t+1}-b_i\right) a_i=0$.
This together with Theorem \ref{theorem}(ii) gives
\begin{align}
0&=\lim\limits_{t\to\infty}\sum\limits_{i=1}^m\phi_{+}' \left(\left(a_i^T x^{k_t} -b_i\right)^2\right) \left(a_i^T \tilde{x}^{k_t+1}-b_i\right)a_i\notag\\
&= \lim\limits_{t\to\infty}\sum\limits_{i=1}^m\phi_{+}'  \left(\left(a_i^T x^{k_t} -b_i\right)^2\right)\left(a_i^T x^{k_t}-b_i\right)a_i\notag\\ 
\label{vip11} &= \sum\limits_{i=1}^m  \phi_{+}' \left(\left(a_i^T x^{*} -b_i\right)^2\right) \left(a_i^T x^{*}-b_i\right)a_i.
\end{align}
However, by Theorem \ref{theorem}(iii), it follows that $\Phi\left((Ax^{*}-b)\circ (Ax^{*}-b)\right)=\sigma$. Together with Fact \ref{implyMFCQ}, this implies  $\sum\limits_{i=1}^m  \phi_{+}' \left(\left(a_i^T x^{*} -b_i\right)^2\right) \left(a_i^T x^{*}-b_i\right) a_i \neq 0$, which contradicts \eqref{vip11}. Thus, the sequence $\left\{\tilde{\lambda}_k\right\}_{k\in\mathbb{N}}$ is bounded.

(v): Let $x^{*}$ be any accumulation point of $\{x^k\}_{k\in\mathbb{N}}$. Then there exists a  subsequence $\{x^{k_t}\}_{k_t\in\mathbb{N}}\subset\{x^k\}_{k\in\mathbb{N}}$ such that  $\lim\limits_{t\to \infty}x^{k_t +1}=x^{*}$. This together with Proposition \ref{proposition}(ii) shows
\begin{align}\label{3.32}
     \lim\limits_{t\to \infty}\tilde{x}^{k_t +1}=x^{*}.
\end{align}
In view of the boundedness of $\{x^k\}_{k\in\mathbb{N}}$ and $\{\bar\lambda_k\}_{k\in\mathbb{N}}$, by passing to a subsequence if necessary, we have without loss of generality that 
\begin{align}\label{3.33}
     \lim\limits_{t\to \infty}x^{k_t } = \bar{x}
\end{align}
and $\lim\limits_{t\to \infty}\tilde{\lambda}_{k_t}=\lambda_{*}$ for some $\bar x\in \R^n$ and $\lambda_*\in \R$, respectively. Since $x^{*}$ is an accumulation point of $\{x^k\}_{k\in\mathbb{N}}$, in view of Theorem \ref{theorem}(i) and (iii), one has that $x^*$ and $\lambda_*/2$ satisfy the former two conditons \eqref{kkt1} and \eqref{kkt2} in Definition \ref{KKT}. That is,
$\Phi\left((Ax^{*}-b) \circ (Ax^{*}-b)\right) \leq \sigma$ and $\lambda_{*} \left(\Phi((Ax^{*}-b)\circ (Ax^{*}-b))-\sigma\right)=0$. Now, to complete the proof, it suffices to show that $\left(x^{*},\frac{\lambda_{*}}{2}\right)$ also satisfies \eqref{kkt3}.

In order to do so, we first have from  Theorem \ref{theorem}(ii) that for each $i$, 
\begin{align}\label{relationii}
\displaystyle&\lim\limits_{t\to\infty}\phi_{+}' \left(\left(a_i^T x^{k_t} -b_i\right)^2\right) \left(a_i^T \tilde{x}^{k_t+1}-b_i\right)\notag\\
=  \displaystyle&\;\lim\limits_{t\to\infty}\phi_{+}' \left(\left(a_i^T x^{k_t} -b_i\right)^2\right) \left(a_i^T x^{k_t}-b_i\right).
\end{align}
 Define $I=\left\{i:\phi_{+}'  \left(\left(a_i^T \bar{x} -b_i\right)^2\right) > 0\right\}$. Then  for each $i\in I$, it follows from \eqref{3.32}-\eqref{relationii} and the continuity of $\phi_{+}'$ that 
 \begin{equation}\label{equality_limited points}
 a_i^T x^{*}=a_i^T \bar{x}.
 \end{equation}
 On the other hand, for each $i\notin I$ (i.e., $\phi_{+}' \left(\left(a_i^T \bar{x} -b_i\right)^2\right)=0$), we claim that
 \begin{equation*}
 \phi_{+}' \left(\left(a_i^T x^{*} -b_i\right)^2\right)=0.
 \end{equation*}
 In fact, if on the contrary that there exists $i_0 \notin I$ such that $\phi_{+}' \left(\left(a_{i_0}^T x^{*} -b_{i_0}\right)^2\right)>0$, then by using the concavity of $\phi$, one has that $\left(a_i^T \bar{x}-b_i\right)^2$ is a maximizer of $\phi$ for all $i\notin I$. Thus, the following two inequalities hold:
\begin{align*}
      \phi\left(\left(a_{i_0}^T x^{*} -b_{i_0}\right)^2\right)&< \phi\left(\left(a_{i_0}^T \bar{x} -b_{i_0}\right)^2\right), \quad \text{and}\\
      \phi\left(\left(a_{i}^T x^{*} -b_{i}\right)^2\right)&\leq \phi\left(\left(a_{i}^T \bar{x} -b_{i}\right)^2\right)~\forall i \notin I\cup \{i_0\}.
\end{align*}
In view of above display, we further have 
\begin{align}
     &\Phi \left((Ax^{*}-b)\circ (Ax^{*}-b)\right) =\sum\limits_{i=1}^m  \phi\left(\left(a_{i}^T x^{*} -b_{i}\right)^2\right) \notag\\
     &=\phi\left(\left(a_{i_0}^T x^{*} -b_{i_0}\right)^2\right) +\sum\limits_{i\neq i_0, i\notin I} \phi\left(\left(a_{i}^T x^{*} -b_{i}\right)^2\right) +\sum\limits_{i\in I} \phi\left(\left(a_{i}^T \bar{x} -b_{i}\right)^2\right)\notag\\
     &\leq \phi \left(\left(a_{i_0}^T x^{*} -b_{i_0}\right)^2\right) + \sum\limits_{i\neq i_0, i\notin I} \phi \left(\left(a_{i}^T \bar{x} -b_{i}\right)^2\right) +\sum\limits_{i\in I} \phi\left(\left(a_{i}^T \bar{x} -b_{i}\right)^2\right) \notag\\
     &< \phi \left(\left(a_{i_0}^T \bar{x} -b_{i_0}\right)^2\right)+ \sum\limits_{i\neq i_0, i\notin I} \phi \left(\left(a_{i}^T \bar{x} -b_{i}\right)^2\right) +\sum\limits_{i\in I} \phi \left(\left(a_{i}^T \bar{x} -b_{i}\right)^2\right) \notag\\
     &=\sum\limits_{i=1}^m  \phi \left(\left(a_{i}^T \bar{x} -b_{i}\right)^2\right) =\Phi \left((A\bar{x}-b)\circ (A\bar{x}-b)\right), \notag
 \end{align}
where the second equality follows from \eqref{equality_limited points}.
However, since $x^{*}$ and $\bar{x}$ are both accumulation points of $\{x^k\}$, the above display contadicts 
\[\Phi \left((Ax^{*}-b)\circ (Ax^{*}-b)\right) = \Phi \left((A\bar{x}-b)\circ (A\bar{x}-b)\right) =\sigma\]
which is deduced from Theorem \ref{theorem}(iii). Therefore, we obtain the claim and conclude that
\begin{align}\label{relation conclusion}
    \begin{cases}
    a_i^T x^{*} =a_i^T \bar{x}, & {\rm if}\  \phi_{+}' \left(\left(a_i^T \bar{x} -b_i\right)^2\right)>0,\\
    \phi_{+}' \left(\left(a_i^T \bar{x} -b_i\right)^2\right) = \phi_{+}'  \left(\left(a_i^T x^{*} -b_i\right)^2\right) =0, & {\rm if}\  \phi_{+}'  \left(\left(a_i^T \bar{x} -b_i\right)^2\right) =0.
    \end{cases}
\end{align}

Now, with \eqref{normal cone}, the definition of $\tilde{v}^{k+1}$ in \eqref{def_v^k} and the notations in \eqref{notation2}, one has 
\begin{align*}
     \xi^{k_t} &\in \omega_{\mathcal{G}}^{k_t} \circ \partial G(\tilde{x}^{k_t+1}) +\tilde{\lambda}_{k_t} A_{k_t}^T  \tilde{u}^{k_t+1}\\
     &=\Psi_{+}' (x^{k_t}_{\mathcal{G}})\circ \partial G(\tilde{x}^{k_t+1})+\tilde{\lambda}_{k_t} A_{k_t}^T \left(A_{k_t}\tilde{x}^{k_t+1} -b^{k_t} -\tilde{v}^{k_t+1}\right)\\
     &=\Psi_{+}' \left(\tilde{x}^{k_t+1}_{\mathcal{G}} +x^{k_t}_{\mathcal{G}} - \tilde{x}^{k_t+1}_{\mathcal{G}}\right) \circ \partial G(\tilde{x}^{k_t+1}) \\
     &~~~+\tilde{\lambda}_{k_t} \left(\sum\limits_{i=1}^m \phi_{+}' \left(\left(a_i^T x^{k_t} -b_i\right)^2\right) \left(a_i^T \tilde{x}^{k_t+1}-b_i\right)a_i -A^T_{k_t} \tilde{v}^{k_t+1}\right).
\end{align*}
Recall \eqref{3.32}, \eqref{3.33} and $\lim\limits_{t\to \infty}\tilde{\lambda}_{k_t} = \lambda_{*}$. Then taking the limit as $t \to \infty$ on the leftmost and rightmost sides of above display, we obtain from Proposition \ref{proposition}(iii), \eqref{relation conclusion}, the boundedness of $\{A_k\}_{k\in\mathbb{N}}$, and the definition of $\tilde{v}^{k+1}$ that
\begin{align*}
        0\in \Psi_{+}' (x^{*}_{\mathcal{G}})\circ \partial G(x^{*})+\lambda_{*} \sum\limits_{i=1}^m \phi_{+}'  \left(\left(a_i^T x^{*} -b_i\right)^2\right) \left(a_i^T x^{*} -b_i\right)a_i
\end{align*}
This implies that \eqref{kkt3} holds when $(x,\lambda)$ is replaced by $\left(x^{*},\frac{\lambda_{*}}{2}\right)$. The proof is therefore completed.
\end{proof}

\section{Numerical experiments}\label{section5}
 In this section,  we show the performance of Algorithm $\IR$ via numerical experiments. Our codes are written and implemented in MATLAB 2019b, and all numerical experiments are performed on a 64-bit PC equipped with an Intel(R) Xeon(R) Silver 4210 CPU (2.20GHz) and 32GB of RAM.

 We first illustrate the notations in Tables~\ref{tableforGIRIRiter}-\ref{Comparision of the performance 3}.
 The minimum and maximum values of the $ \rm residuals= (\Psi(\vartheta^k_{\mathcal{G}})-\sigma)/{\sigma}$
 ($\rm Res_{\min}$ and $\rm Res_{\max}$, respectively) are  recorded, where $\vartheta^k$ denotes the approximate sparse solution generated by the corresponding algorithm. The objective function value of the recovered vector is recorded as $\rm Fval$. We also record three kinds of data refer to the instances being ``successfully solved'': the average number of inner iterations ($\rm Iter_s$), the average CPU time ($\rm CPU_s$), and the average recovery error ($\rm RecErr_s$). If instances that fail to meet ``successfully solved" are  recorded as $\rm Iter_f$, $\rm CPU_f$ and $\rm RecErr_f$. We call a random instance is ``successfully solved'' if the recovery error meets the criterion $\frac{\|\vartheta^k-x_{\rm orig}\|}{\max\{\|x_{\rm orig}\|,1\}}\leq 0.01$, in which $x_{\rm orig}$ is generated through MATLAB commands in the following Subsection 5.1.

\subsection{Importance of new problem and algorithm}\label{improt-sec}

  The introduction of a group sparsity structure elevates the importance of problem \eqref{Problem}   and Algorithms \ref{algorithm1} and \ref{Algorithm2}, named  as ${\IR}_{\rm ADMM}$, which form a central focus of this subsection.

  We consider the following constrained optimization problem:
\begin{equation}\label{problem in experiment}
    \begin{aligned}
        \min_{x\in \R^n}& \sum_{i=1}^q \log\left(1+\|x_{\mathcal{G}_i}\| /\epsilon\right), \quad\;
        {\rm s.t.} \quad\; \sum_{i=1}^m \log \left( 1+(a_i^T x-b_i)^2/\delta^2 \right) \leq \sigma,
    \end{aligned}
\end{equation}
where $m\ll n$, $\epsilon >0$, $\delta >0$, $b\in \R^m$ and $\sigma >0$. The row vectors $a_1,a_2,\cdots ,a_m$ of matrix $A$ are linearly independent. Here, $\psi$ and $\phi$ correspond to the $\log$ penalty function and Cauchy loss function, respectively. We can see that this model incorporates a log penalty function \cite{ChenLuPong2016} in the objective function, which can effectively induce variable sparsity while enhancing model interpretability, reducing complexity, and stabilizing the optimization process. By integrating the Cauchy loss function \cite{LiLuDongTao2018} into the constraints, it significantly improves the model's robustness and strengthens resistance to outliers and noise. 

We note that \eqref{problem in experiment} is a special case of problem \eqref{Problem}, which 
satisfies  Assumptions \ref{assum}.
Thus,  every accumulation point of the generated sequence $\{ x^k\}_{k\in\mathbb{N}}$ by Algorithm \ref{algorithm1} is a stationary point, as guaranteed by Theorem \ref{theorem}. 

For the numerical comparsion, we generate an $m\times n$ matrix $A$ with independent and identically distributed (i.i.d.) standard Gaussian entries. Next, we generate an original block-sparse signal $x_{\rm{orig}}\in \R^n$ using the following MATLAB commands:
\begin{verbatim}
        I = randperm(n/J); I = I(s+1 : end); 
        x0 = randn(J,n/J); x0(:, I) = 0; 
        x_{orig} = reshape(x0, n, 1);
\end{verbatim}
Here, 
${J}$ denotes the size of each block, and $s$ is the number of nonzero blocks. We further define $\eta = 0.005*{\rm{randn}}(m, 1)$, where the $\rm{noise}$ entries follow an i.i.d standard Cauchy distribution, and set $b = Ax_{\rm orig} + \eta$. Finally, we specify $J=2$, $\epsilon=0.1$ and $\sigma=1.2 \sum\limits_{i=1}^{m}\log \left(1+(\eta_i)^2/\delta^2 \right)$ with $\delta=0.05$. 

In the numerical experiments, we set the three-element tuple $(m,n,s)=(540i,2560i,80i)$ for $i\in\{2,4,6,8,10\}$.  Based on the generation of $x_{\rm orig}$, we have ${\rm nnz}(x_{{\rm orig}})=80*2i$. For each $i$, 30 random instances are generated by the above commands. We report the mean value of $L$ ($\rm L$), the average CPU time for computing $L$ (${\rm Time_L}$), the average  CPU time for computing $A^{\dagger}b$ (${\rm Time_{slater}}$), and the average CPU time for performing $\rm QR$ decomposition of $A^T$ ($\rm Time_{QR}$) in Table \ref{Record of  average value and  average CPU time}.

 \begin{table}[h]
    \centering
    \caption{Record of  average value and  average CPU time}\label{Record of  average value and  average CPU time}
    \begin{tabular}{lcccc} 
        \toprule  
        \multicolumn{1}{c}{$(m,n,s)$} & $\rm L$ & $\rm Time_{L}$& $\rm Time_{QR}$ & $\rm Time_{slater}$\\
        \midrule  
        $(1080,5120,160)$ & 1.08e+04  &  0.2  &    0.4  &  0.0\\
        $(2160,10240,320)$ & 2.18e+04  &   1.0  &    1.3  &  0.0  \\
        $(3240,15360,480)$ & 3.26e+04  &  2.5  &    2.9  &  0.0 \\
        $(4320,20480,640)$ & 4.35e+04  &  5.2  &    5.2  &  0.1  \\
       $(5400,25600,800)$  & 5.44e+04  &  6.6  &    8.5  &  0.1  \\
        \bottomrule  
    \end{tabular}
\end{table}

Note that if $q=n$ in problem \eqref{problem in experiment},
then it reduces to 
\begin{equation}\label{SSQmodleinexperiment}
    \begin{aligned}
        \min_{x\in \R^n}& \sum_{i=1}^n \log\left(1+ |x_i| /\epsilon\right),\quad
        {\rm s.t.}\quad \sum_{i=1}^m \log \left( 1+(a_i^T x-b_i)^2/\delta^2 \right) \leq \sigma.
    \end{aligned}
\end{equation}
This problem is  a special case of problem \eqref{problem for ssq} without group structure, which is from \cite{SSQTKP23}.
Sun and Pong \cite{SSQTKP23}  also propose the double iterative reweighted algorithm to solve problem \eqref{problem for ssq}, whose subproblem is also  can be solved by ADMM.
These strategies are named as ${\YIR}_{\rm ADMM}$  for short.

For  ${\IR}_{\rm ADMM}$ and ${\YIR}_{\rm ADMM}$, we have the following common settings.  The initial point $x^0=A^{\dagger}b$ is computed using the following MATLAB commands:
\begin{verbatim}
		[Q,R] = qr(A',0); xfeas = Q*(R'\b);
\end{verbatim}
We set $\tau_k=\max\left\{5^{-k-1}, 10^{-8}\right\}$ and $\mu_k=\max \left\{1.2^{-k-1},10^{-8}\right\}$ for Algorithm~\ref{algorithm1}.
The outer iteration terminates when the criterion $\frac{\|x^{k+1}-x^k\|}{\max\left\{\|x^k\|,1\right\}}\leq 10^{-4}$ is met.

The other parameters of ${\YIR}_{\rm ADMM}$ for its inner problem is what the reference \cite{SSQTKP23} proposed.
In the implementation of  ${\IR}_{\rm ADMM}$,
at each iteration $k$, 
  we set 
$\beta=\bar{L}^{-\frac{1}{2}}$, $\gamma=\frac{0.99(1+\sqrt{5})}{2}$,  $\rho$ and  $\bar{L}$ as in Remark \ref{remarkforbanzhengding} for Algorithm \ref{Algorithm2}.  
Note that ${\YIR}_{\rm ADMM}$ use the  standard $\ell_2$-norm square as distance function. Thus, we choose $f$ in Definition \ref{def1} to be $f(x)=\|x\|^2$. This implies \eqref{zzzz1} equals to
\begin{equation*}
	{\rm dist}\left(0,\partial \left(\omega_{\mathcal{G}}^k\circ G\right)(\tilde{x}^{k+1})+A_{k}^TN_{ U^k}(\tilde{u}^{k+1})\right)\leq \sqrt{\epsilon_{k}},
\end{equation*}
where $\epsilon_k= \min\left\{ \bar{\epsilon}_k, \tau_k \bar{\Gamma_l}, \gamma\beta \bar{\epsilon}_k, \gamma \beta \tau_k(\|z^{k, l+1}\|+1) \right \}$.

The numerical results are reported in Tables \ref{tableforGIRIRnnz} and \ref{tableforGIRIRiter}.
In Table \ref{tableforGIRIRnnz},
${\rm Success(\%)}$ and ${\rm nnz(\%)}$ are used to  denote
  the success rate and the percentage of sparsity successfully recovered, respectively.
  From the value of ${\rm nnz}$,   we can see that problem \eqref{problem in experiment} can obtain more sparser solution point than problem~\eqref{SSQmodleinexperiment}.

\begin{table}[h]
  \centering
  \caption{Vector sparsity recovery success rate and recovery degree proportion}
  \label{tableforGIRIRnnz}
  \begin{tabular}{lcccc}
    \toprule
    \multicolumn{1}{c}{Dimension and sparsity} & \multicolumn{2}{c}{${\IR}_{\rm ADMM}$} & \multicolumn{2}{c}{${\YIR}_{\rm ADMM}$ in \cite{SSQTKP23}} \\
    \cmidrule(lr){2-3} \cmidrule(lr){4-5} 
    \multicolumn{1}{c}{$(m,n,s)$} & $\rm Success(\%)$ & $\rm nnz(\%)$ & $\rm Success(\%)$ & $\rm nnz(\%)$ \\
    \midrule
    $(1080,5120,160)$ & 100 & 100 & 100 & 63\\
    $(2160,10240,320)$ & 100 & 100 & 100 & 17\\
    $(3240,15360,480)$ & 100 & 100 & 100 & 3 \\
    $(4320,20480,640)$ & 100 & 100 & 100 & 20 \\
    $(5400,25600,800)$ & 100 & 100 & 100 & 0 \\
    \bottomrule
  \end{tabular}
\end{table}

We present the more comparation on ${\IR}_{\rm ADMM}$ and ${\YIR}_{\rm ADMM}$     in Table~\ref{tableforGIRIRiter}.
It can be observed that ${\IR}_{\rm ADMM}$ consistently requires less CPU time and a smaller value of $\rm Fval$ than ${\YIR}_{\rm ADMM}$. The number of iterations needed for Algorithm ${\IR}_{\rm ADMM}$ is fewer than for ${\YIR}_{\rm ADMM}$.  The results indicate that  ${\IR}_{\rm ADMM}$ outperforms ${\YIR}_{\rm ADMM}$ in preserving the sparsity of the original vector across all cases.
\begin{table}[h]
  \centering
  \caption{Numerical comparison  of ${\IR}_{\rm ADMM}$ and ${\YIR}_{\rm ADMM}$}
  \label{tableforGIRIRiter}
  \resizebox{\textwidth}{15mm}{
  \begin{tabular}{lcccccccccccc}
    \toprule
    \multicolumn{1}{c}{Dimension and sparsity} & \multicolumn{6}{c}{${\IR}_{\rm ADMM}$} &  \multicolumn{6}{c}{${\YIR}_{\rm ADMM}$ in \cite{SSQTKP23}} \\
    \cmidrule(lr){2-7} \cmidrule(lr){8-13}
    \multicolumn{1}{c}{$(m,n,s)$} & $ \rm Iter_s $ & $\rm CPU_s$ & $\rm Fval$ & $\rm RecErr_s$ & $\rm Res_{min}$ & $\rm Res_{max}$ & $ \rm Iter_s $ & $\rm CPU_s$ & $\rm Fval$ & $\rm RecErr_s$ & $\rm Res_{min}$ & $\rm Res_{max}$ \\
    \midrule
    $(1080,5120,160)$ & 1576 & 10.2 & 4.0e+02 &  3.2e-04 & -3.9e-02 & -2.2e-03 & 2835 & 19.4 & 6.2e+02 & 3.3e-04 & -5.3e-03 & 6.0e-04\\
    $(2160,10240,320)$ & 2231 &  51.8 & 7.9e+02 & 2.4e-04 & -6.8e-03 & -1.4e-05 & 3812   & 97.0 & 1.2e+03 & 2.4e-04 & -1.5e-03 & 1.7e-05\\
    $(3240,15360,480)$ & 2870   & 143.7 & 1.2e+03 & 1.9e-04 & -2.6e-03 & -3.6e-06 & 4635   & 246.1 & 1.9e+03 &  1.9e-04 & -6.0e-04 & -6.3e-06\\
    $(4320,20480,640)$ & 3422  & 298.2  & 1.6e+03 & 1.7e-04 & -1.8e-03 & -2.2e-05 & 5400   & 525.4  & 2.5e+03 & 1.7e-04 & -3.2e-04 & -7.1e-06 \\
    $(5400,25600,800)$ & 3939 & 519.7 & 2.0e+03 & 1.5e-04 & -1.0e-03 & -1.1e-05 & 6212 & 947.6 & 3.1e+03 & 1.5e-04 & -3.3e-04 & -4.4e-06\\
    \bottomrule
  \end{tabular}}
\end{table}


\subsection{Algorithm comparision}
 
In this subsection, we first apply ${\IR}_{\rm ADMM}$ and $\textbf{SPA}$ to solve the following gourp sparse optimization problem
\begin{equation}
\label{problem in experiment-2}
    \begin{aligned}
        \min_{x\in \R^n}& \sum_{i=1}^q \log\left(1+\|x_{\mathcal{G}_i}\| /\epsilon\right), \quad\;
        {\rm s.t.} \quad\; \|Ax-b\| \leq \sqrt{\sigma}.
    \end{aligned}
\end{equation}
This problem is a special case of both \eqref{Problem} and \eqref{proble-pan}. It reduces to \eqref{Problem} when $\phi = I$, and to \eqref{proble-pan} because its objective function can be reformulated as a capped folded concave function (see Appendix for details on this transformation and the proximal operator computation). Therefore, the $\mathbf{SPA}$ method can be applied to solve it.

In the implementation, we use the same way to generate $A$, $b$, $\sigma$ in the Subsection \ref{improt-sec}, and we set $\epsilon=0.3$.
 For \textbf{SPA}, we set the parameter as $\nu=\epsilon \left(e^{C}-1\right)$, $n_i=J=2$, $x^0=1_n$,  $\lambda_0=40$, $\mu_0=\epsilon_0=1$, $\rho=3$, $\theta=\frac{1}{\rho}$, $M=3$, where $\nu$ and $C$ are defined in Remark \ref{reformulation of prob_numexperiment}.  We set  the feasible point the same as that in ${\IR}_{\rm ADMM}$.
The entire algorithm that we set terminates when 
\begin{equation*}
    \max\left\{ \left(\|Ax-b\|^2 -\sigma \right)_{+},0.01\epsilon_k \right\} \leq 10^{-6}
\end{equation*}
is satisfied.

The corresponding numerical reuslts are presented in Table \ref{Comparision of the performance 2}.  Here we can see that although \textbf{SPA} requires less CPU time than ${\IR}_{\rm ADMM}$  to solve the problem \eqref{problem in experiment-2}, it underperforms  ${\IR}_{\rm ADMM}$ in  vector recovery, sparsity preservation and the value of $\rm Fval$.

\begin{table}[h]
  \centering
  \caption{The first numerical records of ${\IR}_{\rm ADMM}$ and $\mathbf{SPA}$}
    \label{Comparision of the performance 2}
  \resizebox{\textwidth}{15mm}{
  \begin{tabular}{lcccccccccccc}
    \toprule
    \multicolumn{1}{c}{Dimension and sparsity} & \multicolumn{6}{c}{${\IR}_{\rm ADMM}$} & \multicolumn{6}{c}{$\mathbf{SPA}$ in \cite{LiliPan2021}} \\
    \cmidrule(lr){2-7} \cmidrule(lr){8-13} 
    \multicolumn{1}{c}{$(m,n,s)$} & $ \rm Iter_s $ & $\rm CPU_s$ & $\rm Fval$ & $\rm RecErr_s$ & $\rm Res_{min}$ & $\rm Res_{max}$ & $ \rm Iter_f $ & $\rm CPU_f$ & $\rm Fval$ & $\rm RecErr_f$ & $\rm Res_{min}$ & $\rm Res_{max}$ \\
 \midrule
    $(1080,5120,160)$ & 1848 & 11.5 & 2.5e+02 & 3.3e-04 & -10.0e-01 & -4.7e+00 & 47& 3.0 & 9.3e+02 & 8.9e-01 & -4.0e-05 & 4.2e-7\\
    $(2160,10240,320)$ & 2144  &   47.9 & 5.0e+02 & 2.4e-04 & -10.0e-01 & 4.3e+00 & 32  & 6.5 & 1.9e+03  & 8.9e-01  & -5.1e-05 & 1.9e-07 \\
    $(3240,15360,480)$ & 2137  &  102.9  & 7.5e+02 & 1.9e-04  & -9.8e-01 & 9.1e+00 &  32  & 14.0  & 2.8e+03 & 8.9e-01  & -1.9e-04 & 4.4e-07\\
    $(4320,20480,640)$ & 1885  &  159.2  & 9.9e+02 & 1.7e-04  & -10.0e-01 & 8.6e-01 & 30  & 23.0  & 3.7e+03 & 8.9e-01  & -1.5e-05 & 4.0e-07\\
    $(5400,25600,800)$ & 2020  &  260.7  & 1.2e+03 & 1.5e-04  & -10.0e-01 & 3.8e+00 & 29  & 34.0 & 4.6e+03 & 8.9e-01  & -1.9e-05 & 2.7e-07\\
    \bottomrule
  \end{tabular}}
\end{table}

Next, we conduct a comparative numerical study to evaluate the performance of the proposed  ${\IR}_{\rm ADMM}$  against \textbf{SPA} in solving problem \eqref{problem in experiment}, which is
\begin{equation}\label{prob:SPA-GIR}
    \begin{aligned}
        \min_{x\in \R^n}& \sum_{i=1}^q \log\left(1+\|x_{\mathcal{G}_i}\| /\epsilon\right), \quad\;
        {\rm s.t.} \quad\; \sum_{i=1}^m \log \left( 1+(a_i^T x-b_i)^2/\delta^2 \right) \leq \sigma.
    \end{aligned}
\end{equation}
The objective of this experiment is to benchmark our approach against a recognized method, assessing key performance indicators such as solution accuracy, and computational stability.

In the implementation, we also use the same way to generate $A$, $b$, $\sigma$ as that in Subsection \ref{improt-sec}. The setting of $\IR_{\rm ADMM}$ is also what Subsection \ref{improt-sec} proposed. The parameter settings of \textbf{SPA} are the same as those used when solving \eqref{problem in experiment-2}.
Moreover, we set the stopping criterion of  \textbf{SPA}  as
\begin{equation*}
    \max\left\{ \left(\sum_{i=1}^m \log \left( 1+(a_i^T x-b_i)^2/\delta^2 \right) -\sigma \right)_{+},0.01\epsilon_k \right\} \leq 10^{-6}.
\end{equation*}
Numerical results show that, for our Algorithm ${\IR}_{\rm ADMM}$, the $\rm Success (\%)$ and $\rm nnz(\%)$ recorded still satisfy $100\%$, while \textbf{SPA} still fails to achieve recovery and cannot guarantee sparsity. The other experimental data are recorded in Table \ref{Comparision of the performance 3}, which also demonstrates the excellent performance of ${\IR}_{\rm ADMM}$. 

\begin{table}[h]
  \centering
  \caption{The second numerical records of ${\IR}_{\rm ADMM}$ and $\mathbf{SPA}$}
    \label{Comparision of the performance 3}
  \resizebox{\textwidth}{15mm}{
  \begin{tabular}{lcccccccccccc}
    \toprule
    \multicolumn{1}{c}{Dimension and sparsity} & \multicolumn{6}{c}{${\IR}_{\rm ADMM}$} & \multicolumn{6}{c}{$\mathbf{SPA}$ in \cite{LiliPan2021}} \\
    \cmidrule(lr){2-7} \cmidrule(lr){8-13} 
    \multicolumn{1}{c}{$(m,n,s)$} & $ \rm Iter_s $ & $\rm CPU_s$ & $\rm Fval$ & $\rm RecErr_s$ & $\rm Res_{min}$ & $\rm Res_{max}$ & $ \rm Iter_f $ & $\rm CPU_f$ & $\rm Fval$ & $\rm RecErr_f$ & $\rm Res_{min}$ & $\rm Res_{max}$ \\
   \midrule
    $(1080,5120,160)$ & 2431 & 15.7 & 2.5e+02 & 3.4e-04 & -5.0e-03 & -2.5e-05 & 23& 1.5 & 9.3e+02 & 8.9e-01 & -8.1e-06 & -3.5e-11\\
    $(2160,10240,320)$ &  3497 & 80.1 & 4.9e+02 & 2.4e-04 & -5.3e-04 & 7.6e-08 & 22 & 4.6 & 1.9e+03 & 8.9e-01 & -9.2e-06& -2.4e-11\\
    $(3240,15360,480)$ & 4425 & 221.6 & 7.5e+02 & 1.9e-04& -2.0e-04& -2.3e-05 & 23&  9.9 & 2.8e+03 & 8.9e-01& -1.2e-05& -4.8e-10\\
    $(4320,20480,640)$ & 5207& 451.6& 9.9e+02 & 1.6e-04 & -2.6e-03& -2.2e-07 & 22& 16.0& 3.7e+03 & 8.9e-01& -1.5e-05& -9.5e-10 \\
    $(5400,25600,800)$ & 5945& 781.0& 1.2e+03 & 1.5e-04& -3.1e-04& -2.5e-07 & 22& 26.0& 4.6e+03 &8.9e-01& -1.8e-05& -7.3e-12\\
    \bottomrule
  \end{tabular}}
\end{table}


\section{Conclusion}
In this work, we propose the first application of the  iteratively reweighted algorithm to solve a class of group sparse optimization problems. Both the objective function and the constraint of our problem involve non-convex functions, which broadens the scope of group sparse-related research models and enhances the robustness of the model. By appropriately reformulating the problem, the non-convex problem is transformed into a sequence of convex subproblems, which are then approximated using the well-established convex solver ADMM. In the algorithm design, we design the termination criteria of subproblem solver  by using Bregman distance rather than European distance. Furthermore, the update rule for the next iteration point such that all iteration points generated by the proposed algorithm remain within the feasible region.

However, due to the specific structure of the constructed subproblems, it is challenging to  ensure that $x^{k+1} - x^k\rightarrow 0$. Additionally, analyzing the convergence rate of the algorithm for our research remains non-trivial. Consequently, future research directions may include: (1) analyzing the convergence rate of the proposed algorithm, (2) extending the application of the algorithm to practical, concrete scenarios, and (3) design alternative algorithms for solving the problem model in this paper, leveraging the explicit expression of the proximity operator of the log-sum  penalty function for algorithm development.

\section*{Appendix}
Inspired by \cite[Section 6.1]{ZhangYongle2023}, we first establish that problems \eqref{problem in experiment-2} and \eqref{prob:SPA-GIR} are equivalent to an optimization problem with an additionally imposed bounded feasible set.

\begin{lemma}\label{The form capped folded function for our problem}
	Under Assumption \ref{assum}, the solution set of problems \eqref{problem in experiment-2} and \eqref{prob:SPA-GIR} are bounded.
\end{lemma}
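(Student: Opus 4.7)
The plan is to exhibit a common feasible point for both problems whose objective value is finite, and then use this value as an upper bound to control the norm of any optimal solution. First I would take $x^0 = A^{\dagger} b$ as a benchmark feasible point. Under Assumption \ref{assum}(iii), $A$ has full row rank, hence $A A^{\dagger} b = b$, so $\|Ax^0-b\| = 0 \leq \sqrt{\sigma}$ and $\sum_{i=1}^m \log(1+(a_i^T x^0 - b_i)^2/\delta^2) = 0 < \sigma$. Thus $x^0$ is feasible for both \eqref{problem in experiment-2} and \eqref{prob:SPA-GIR}. Let $v_0 := \sum_{i=1}^q \log(1+\|x^0_{\mathcal{G}_i}\|/\epsilon) < \infty$ denote its finite objective value. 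Any optimal $x^*$ of either problem must then satisfy $\sum_{i=1}^q \log(1+\|x^*_{\mathcal{G}_i}\|/\epsilon) \leq v_0$.

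Next I would translate this objective bound into a norm bound using the group structure. The identity $\|x\|^2 = \sum_{i=1}^q \|x_{\mathcal{G}_i}\|^2$ yields $\max_{1 \leq i \leq q} \|x_{\mathcal{G}_i}\| \geq \|x\|/\sqrt{q}$. Since every summand $\log(1+\|x_{\mathcal{G}_i}\|/\epsilon)$ is nonnegative and $\log(1+\cdot/\epsilon)$ is monotonically increasing, one obtains
\begin{equation*}
\sum_{i=1}^q \log\!\left(1+\frac{\|x_{\mathcal{G}_i}\|}{\epsilon}\right) \;\geq\; \log\!\left(1+\frac{\max_i \|x_{\mathcal{G}_i}\|}{\epsilon}\right) \;\geq\; \log\!\left(1+\frac{\|x\|}{\sqrt{q}\,\epsilon}\right).
\end{equation*}
Applied to any optimal $x^*$, this forces $\log(1+\|x^*\|/(\sqrt{q}\epsilon)) \leq v_0$, i.e. $\|x^*\| \leq \sqrt{q}\,\epsilon\,(e^{v_0}-1)$, which is a uniform bound independent of $x^*$. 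The solution sets of both problems are therefore contained in a common ball and hence bounded.

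I do not expect any serious obstacle here; the argument is a straightforward coercivity argument in disguise, relying only on $\psi(t) = \log(1+t/\epsilon) \to \infty$ as $t\to\infty$ (part of Assumption \ref{assum}(i)), monotonicity of $\psi$, and the reverse inequality relating $\|x\|$ to the maximum block norm $\max_i \|x_{\mathcal{G}_i}\|$. The only mild subtlety is verifying the feasibility of $x^0 = A^{\dagger} b$ for \eqref{problem in experiment-2} and \eqref{prob:SPA-GIR}, which is immediate from the full row rank of $A$ and $\sigma > 0$ in Assumption \ref{assum}(iii). If the statement is later extended to a general $\psi$ satisfying Assumption \ref{assum}(i), the identical template applies once $\log(1+\cdot/\epsilon)$ is replaced by $\psi$ and $\log(1+\|x\|/(\sqrt q\epsilon))\leq v_0$ is inverted using $\psi^{-1}$ on $(0,\infty)$.
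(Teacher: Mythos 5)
Your proposal is correct and follows essentially the same route as the paper: exhibit $A^{\dagger}b$ as a feasible benchmark with finite objective value, use nonnegativity and monotonicity of $\log(1+\cdot/\epsilon)$ to bound the maximal block norm of any optimal solution by $\epsilon(e^{C}-1)$, and conclude boundedness of the full vector. The only cosmetic difference is that you fold the inequality $\max_i\|x_{\mathcal{G}_i}\|\geq\|x\|/\sqrt{q}$ into the chain explicitly, whereas the paper stops at the bound on $\|x_{\mathcal{G}}\|_{\infty}$ and appeals to the definition of $x_{\mathcal{G}}$.
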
	

 \begin{proof}
Note that $A^{\dagger}b$ is a feasible point of problems \eqref{problem in experiment-2} and \eqref{prob:SPA-GIR}. In view of the definition of $G$ in Section 1, we set $\tilde{x}_{\mathcal{G}}=G(A^{\dagger}b)$ and define $C=\sum\limits_{i=1}^q \log\left(1+\frac{(\tilde{x}_{\mathcal{G}})_{i}}{\epsilon}\right).$ The level-boundedness of the objective function in problems \eqref{problem in experiment-2} and \eqref{prob:SPA-GIR} implies that its solution set (denoted by $S$) is nonempty. For any $x\in S$, the following inequality holds:
 \[\sum_{i=1}^q \log\left(1+\frac{({x}_{\mathcal{G}})_{i}}{\epsilon}\right) \leq \sum_{i=1}^q\log\left(1+\frac{(\tilde{x}_{\mathcal{G}})_{i}}{\epsilon}\right).\] Given that $\log\left(1+\frac{(\cdot)}{\epsilon}\right)$ is nondecreasing over $\R_+$, we obtain:
\begin{equation*}
	C\geq \sum_{i=1}^q \log\left(1+\frac{({x}_{\mathcal{G}})_{i}}{\epsilon}\right) \geq \max_{1\leq i\leq q}\log\left(1+\frac{({x}_{\mathcal{G}})_{i}}{\epsilon}\right)
	=\log\left(1+\frac{\|{x}_{\mathcal{G}}\|_{\infty}}{\epsilon}\right).
\end{equation*}
This implies $\|{x}_{\mathcal{G}}\|_{\infty} \leq \epsilon \left(e^C-1\right)$. The proof is completed by invoking the definition of ${x}_{\mathcal{G}}$.
\end{proof}

\begin{remark}\label{reformulation of prob_numexperiment}
 Lemma \ref{The form capped folded function for our problem} indicates $\sum\limits_{i=1}^q \log\left(1+\|x_{\mathcal{G}_i}\| /\epsilon\right)$ is equivalent to the following form:
	\begin{equation}\label{reproblem in experiment}
		\begin{aligned}
			 \sum_{i=1}^q \psi^{\rm CapLog}((x_{\mathcal{G}})_i)
		\end{aligned}
	\end{equation}
where $\psi^{\rm CapLog}$ is the capped function defined as:
\begin{equation}\label{capped function form for our problem}
	\psi^{\rm CapLog}(t)=
	\begin{cases}
		\log \left(1+\frac{|t|}{\epsilon}\right),& {\rm if}\ |t|\in [0,\nu),\\
		C, & {\rm if}\ |t|\in [\nu,\infty),
	\end{cases}
\end{equation}
 with $\nu=\epsilon \left(e^C-1\right)$,  $C=\sum\limits_{i=1}^q \log \left(1+\frac{(\tilde{x}_{\mathcal{G}})_{i}}{\epsilon}\right)$ and $\tilde{x}_{\mathcal{G}}=G(A^{\dagger}b)$.
\end{remark}

Our next goal is to derive the closed-form expression of the proximal mapping for the objective function in the reformulated problem \eqref{reproblem in experiment}. We first present the following lemma:
\begin{lemma}\label{proximal mapping for our problem}
	Let $C$ be defined as in \eqref{capped function form for our problem}. Define $f:\R^n \to \R$ to be
	\begin{align*}
	  f(x)=\begin{cases}
		\log \left(1+\frac{\|x\|}{\epsilon}\right), & {\rm if}\ \|x\|\in [0,\nu),\\
		C, & {\rm if}\ \|x\|\in [\nu,\infty),
	\end{cases}
	\end{align*}
	 where $\epsilon>0$ is a given parameter. The proximal mapping of $f$ with scaling parameter $\lambda>0$ at a point $x\in\R^n$ is:
	\begin{align}\label{proximalM_logP}
		\prox_{\lambda f}(x)=\begin{cases}
			0, & {\rm if}\ \|x\|=0,\\
			\frac{x}{\|x\|}\prox_{\lambda \psi^{\rm CapLog}}(\|x\|), & {\rm otherwise},\\
		\end{cases}
	\end{align}
	where $\psi^{\rm CapLog}$ is defined in \eqref{capped function form for our problem}.
\end{lemma}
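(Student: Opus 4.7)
The plan is to exploit the radial (rotational) structure of $f$: since $f(x)$ depends on $x$ only through $\|x\|$, the proximal mapping inherits a radial form, and the computation collapses to a one-dimensional proximal problem. Concretely, I would start by rewriting
\begin{equation*}
\prox_{\lambda f}(x) = \argmin_{u\in\R^n}\left\{ \psi^{\rm CapLog}(\|u\|) + \frac{1}{2\lambda}\|x-u\|^2 \right\},
\end{equation*}
and observe that the objective is unchanged under replacing $u$ by $Q u$ for any orthogonal $Q$ fixing $x$. This already suggests that for $x\neq 0$ any minimizer must be proportional to $x$.

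To make that rigorous, I would first handle the case $x=0$. There, the second term equals $\|u\|^2/(2\lambda)$, and $\psi^{\rm CapLog}(\|u\|)$ is nondecreasing in $\|u\|$ (since $\log(1+t/\epsilon)$ is increasing on $[0,\nu)$ and takes the value $C=\log(1+\nu/\epsilon)$ on $[\nu,\infty)$), so $u=0$ is clearly the unique minimizer, matching the first branch of \eqref{proximalM_logP}.

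For $x\neq 0$, I would introduce the polar substitution $u = r w$ with $r\ge 0$ and $\|w\|=1$. Then $\|x-u\|^2 = \|x\|^2 - 2r\langle x,w\rangle + r^2$, and for each fixed $r>0$ this is minimized over unit $w$ by taking $w = x/\|x\|$ (Cauchy--Schwarz). Plugging this back gives
\begin{equation*}
\prox_{\lambda f}(x) = \frac{x}{\|x\|}\cdot \argmin_{r\ge 0}\left\{ \psi^{\rm CapLog}(r) + \frac{1}{2\lambda}(\|x\|-r)^2\right\}.
\end{equation*}
Since $\psi^{\rm CapLog}$ is even and the scalar prox $\prox_{\lambda \psi^{\rm CapLog}}(\|x\|)$ at the nonnegative point $\|x\|$ is nonnegative (any candidate $r<0$ is dominated by $|r|$, as $\psi^{\rm CapLog}(r)=\psi^{\rm CapLog}(|r|)$ and $(\|x\|-r)^2 \ge (\|x\|-|r|)^2$ when $\|x\|\ge 0$), the unconstrained scalar prox coincides with the one restricted to $r\ge 0$. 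This yields the second branch of \eqref{proximalM_logP}.

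The main obstacle I anticipate is the careful argument that the optimal $r$ lies in $[0,\infty)$ rather than only in $\R$, i.e., justifying that $\prox_{\lambda\psi^{\rm CapLog}}(\|x\|)\ge 0$; this is what lets the radial reduction deliver a valid formula. Beyond that, one should note that when the scalar prox is multi-valued (which can occur because $\psi^{\rm CapLog}$ is nonconvex with a flat piece on $[\nu,\infty)$), the identity in \eqref{proximalM_logP} should be read as a set-valued equality, but the symmetry argument above still applies verbatim. No further technicalities are needed, since level-boundedness of the proximal objective guarantees that the argmin sets are nonempty and the evenness of $\psi^{\rm CapLog}$ makes the reduction to the half-line $r\ge 0$ lossless.
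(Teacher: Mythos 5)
Your proposal is correct and follows essentially the same route as the paper: reduce to a one-dimensional problem by the Cauchy--Schwarz/same-direction argument for $x\neq 0$, then use the evenness of $\psi^{\rm CapLog}$ and $\|x\|\geq 0$ to identify the constrained scalar prox on $\R_+$ with the unconstrained one. Your treatment of the $x=0$ case and the remark about set-valuedness are slightly more explicit than the paper's, but the underlying argument is the same.
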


\begin{proof}
If $x=0$, the definition of the proximal mapping (given in \eqref{eq:def-prox}) directly implies $\prox_{\lambda f}(x) = 0$.

For $x\neq 0$, we use the definition of the proximal mapping in \eqref{eq:def-prox} to obtain:
\begin{align*}
	\prox_{\lambda f}(x)& = \argmin_{y\in \R^n}\left\{f(y)+\frac{\|x-y\|^2}{2\lambda}\right\}\\
	& = \argmin_{y\in \R^n}\left\{f(y)+\frac{\|x\|^2+\|y\|^2 - 2\left\langle x, y \right\rangle}{2\lambda}\right\}\\
	& = \frac{x}{\|x\|} \argmin_{\|y\| \in \R_+} \left\{\psi^{\rm CapLog}(\|y\|)+\frac{(\|x\| - \|y\|)^2 }{2\lambda}\right\}\\
    &=  \frac{x}{\|x\|} \argmin_{\omega \in \R_+} \left\{\psi^{\rm CapLog}(\omega)+\frac{(\|x\| - \omega)^2 }{2\lambda}\right\}\\
	&=  \frac{x}{\|x\|} \argmin_{\tilde\omega \in \R} \left\{\psi^{\rm CapLog}(\tilde\omega)+\frac{(\|x\| - \tilde\omega)^2 }{2\lambda}\right\}
	 = \frac{x}{\|x\|}  \prox_{\lambda \psi^{\rm CapLog}}(\|x\|),
\end{align*}
where the third equality holds because the minimizer of the optimization problem in the RHS of the second equation is achieved only if the vectors $x$ and $y$ are in the same direction, the fifth equality follows from the same reason as that in the third one. This completes the proof.
\end{proof}

Next, we present the closed form of $\prox_{\lambda f}(x)$ (from \eqref{proximalM_logP}) for the case $\|x\| \neq 0$, which is established in the following lemma:

\begin{lemma}
     Consider the function $f$ in Lemma \ref{proximal mapping for our problem} with $x\neq 0$, and $\psi$ in \eqref{capped function form for our problem}. Define $f^{\rm C-log}: \R_{+} \to \R$ as
	\begin{equation*}
		f^{\rm C-log}\left( u \right)=
		\begin{cases}
			\frac{1}{2}\left( u-\|x\| \right)^2 + \lambda \log \left(1+\frac{u}{\epsilon}\right), & {\rm if}\ 0 \leq u<\nu,\\
			\frac{1}{2}\left( u-\|x\| \right)^2 + \lambda C, & {\rm if}\  u \geq \nu.
		\end{cases}
	\end{equation*}
	Then the proximal mapping of $f$ at $x$ with the scaling parameter $\lambda>0$ is
	\begin{equation}\label{the form when xneq0}
		\prox_{\lambda f}(x)=
		\begin{cases}
			u_{1}^{*}(\|x\|) \frac{x}{\|x\|}, & {\rm if}\ f^{\rm C-log}\left( u_{1}^{*}\left(\|x\| \right) \right) \leq f^{\rm C-log}\left( u_{2}^{*}\left(\|x\| \right) \right),\\
			u_{2}^{*}(\|x\|) \frac{x}{\|x\|}, & {\rm otherwise},
		\end{cases}
	\end{equation}
	where
	$u_{1}^{*}(t)=\min \left\{ \left( \prox_{\lambda g}(t) \right)_{+},\nu\right\}$, $u_{2}^{*}(t)=\max \left\{ t,\nu \right\}$ for all $t\in \R$, and  $g(t)=\log \left(1+\frac{|t|}{\epsilon}\right)$ for all $t\in\R$.
\end{lemma}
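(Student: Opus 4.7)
The plan is to reduce the proximal mapping to a one-dimensional problem on $\R_+$, split the domain at $\nu$ in accordance with the piecewise definition of $f^{\rm C\text{-}log}$, and show that the pair $\{u_1^*(\|x\|),u_2^*(\|x\|)\}$ always contains the global minimizer.

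First I would invoke Lemma \ref{proximal mapping for our problem}, which for $x\neq 0$ gives
\[
\prox_{\lambda f}(x) \;=\; \frac{x}{\|x\|}\,\prox_{\lambda\psi^{\rm CapLog}}(\|x\|),
\]
so the task reduces to computing $\prox_{\lambda\psi^{\rm CapLog}}(t)$ at $t=\|x\|>0$; by definition this is $\argmin_{u\in\R}\{\lambda\psi^{\rm CapLog}(u)+\tfrac12(u-t)^2\}$. Since $\psi^{\rm CapLog}$ is even and nondecreasing on $\R_+$ while $t\geq 0$, a short symmetry argument (replace any candidate $u<0$ by $|u|$; the $\psi^{\rm CapLog}$ value is unchanged while the quadratic term cannot increase because $(|u|-t)^2-(u-t)^2=-4|u|t\leq 0$) confines any minimizer to $\R_+$, where the objective coincides with $f^{\rm C\text{-}log}(u)$.

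Next I would partition $\R_+=[0,\nu]\cup[\nu,\infty)$. On $[\nu,\infty)$, $f^{\rm C\text{-}log}(u)=\tfrac12(u-t)^2+\lambda C$ is a shifted quadratic whose minimizer on $[\nu,\infty)$ is $u_2^*(t)=\max\{t,\nu\}$. On $[0,\nu]$, $f^{\rm C\text{-}log}(u)$ coincides with $h(u):=\tfrac12(u-t)^2+\lambda\log(1+u/\epsilon)$; the unconstrained minimizer on $\R$ of the extension $\lambda g(u)+\tfrac12(u-t)^2$ is $\prox_{\lambda g}(t)$, and the same symmetry argument applied to $g$ yields $\prox_{\lambda g}(t)\geq 0$ for $t\geq 0$, so the $(\cdot)_+$ in $u_1^*$ serves merely as a safeguard. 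When $\prox_{\lambda g}(t)\in[0,\nu]$, we have $u_1^*(t)=\prox_{\lambda g}(t)$ and this value is simultaneously the global minimizer of $h$ on $\R_+$ and, a fortiori, on $[0,\nu]$; the comparison of $f^{\rm C\text{-}log}$ values at $u_1^*$ and $u_2^*$ then correctly identifies the global minimizer on $\R_+$.

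The main obstacle is the remaining case $\prox_{\lambda g}(t)>\nu$, where $u_1^*(t)=\nu$ need not be the minimizer of $h$ on $[0,\nu]$. I would resolve this by showing that in this case $u_2^*(t)$ strictly dominates every point of $[0,\nu]$. The key observation is that $\log(1+u/\epsilon)>C$ for $u>\nu$, so $h(u)>\tfrac12(u-t)^2+\lambda C$ there. Specializing to $u=\prox_{\lambda g}(t)$ and using that this is the global minimizer of $h$ on $\R_+$ gives
\[
f^{\rm C\text{-}log}(u_2^*(t)) \;\leq\; \tfrac12(\prox_{\lambda g}(t)-t)^2+\lambda C \;<\; h(\prox_{\lambda g}(t)) \;\leq\; \min_{u\in[0,\nu]}h(u),
\]
so $f^{\rm C\text{-}log}(u_2^*(t))<f^{\rm C\text{-}log}(u_1^*(t))$ and the comparison correctly selects $u_2^*$. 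Combining both cases yields $\min_{u\in\R_+}f^{\rm C\text{-}log}(u)=\min\{f^{\rm C\text{-}log}(u_1^*(t)),\,f^{\rm C\text{-}log}(u_2^*(t))\}$, and pairing this identification with the reduction in the first paragraph recovers the claimed closed form \eqref{the form when xneq0}.
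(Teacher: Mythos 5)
Your proof is correct and follows the same skeleton as the paper's: reduce to a one-dimensional minimization of $f^{\rm C\text{-}log}$ over $\R_+$ via Lemma \ref{proximal mapping for our problem}, split the domain at $\nu$, and compare the two candidates $u_1^*(\|x\|)$ and $u_2^*(\|x\|)$. The difference is in how the key step is justified. The paper simply asserts, citing the analysis in Section A.2 of Pan and Chen, that $u_1^*$ minimizes $f^{\rm C\text{-}log}$ over $[0,\nu]$ and $u_2^*$ minimizes it over $[\nu,\infty)$; read literally, the first assertion can fail when $\prox_{\lambda g}(\|x\|)>\nu$ (the truncated point $\nu$ need not minimize the nonconvex restriction of $h(u)=\tfrac12(u-\|x\|)^2+\lambda\log(1+u/\epsilon)$ over $[0,\nu]$, whose constrained minimizer may sit at $0$). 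You isolate exactly this case and close it with the domination argument $f^{\rm C\text{-}log}(u_2^*)\leq \tfrac12(\prox_{\lambda g}(\|x\|)-\|x\|)^2+\lambda C< h(\prox_{\lambda g}(\|x\|))\leq \min_{[0,\nu]}h$, using $\log(1+u/\epsilon)>C$ for $u>\nu$, which shows the final comparison still selects the global minimizer even when $u_1^*$ is not the sectional minimizer. So your argument is self-contained where the paper's is cited, and it repairs the one imprecise claim in the paper's version; the only loose end common to both is that $\prox_{\lambda g}$ may be set-valued since $g$ is concave on $\R_+$, which neither treatment addresses explicitly.
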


 \begin{proof}
In view of the definition of proximal mapping, one has  
\[
\prox_{\lambda \psi^{\rm CapLog}}(\|x\|)= \argmin_{u\in\R_+} f^{\rm C-log}(u).
\]
For $0 \leq u <\nu$, we have $\prox_{\lambda \psi^{\rm CapLog}}(\|x\|)=\left(\prox_{\lambda g}(x)\right)_+$; for $u\geq \nu$, $\prox_{\lambda \psi^{\rm CapLog}}(\|x\|)=\|x\|$. Combining this with the analysis in \cite[Section A.2]{LiliPan2021}, the minimizer of $f^{\rm C-log}$ over $\left[0,\nu\right]$ is $u_1^{*}(x)=\min \left\{ \left(\prox_{\lambda g}(x)\right)_+, \nu\right\} $, and the minimizer over $\left[\nu,\infty \right)$ is $u_2^{*}(x)=\max \left\{ x, \nu\right\} $. This directly implies that \eqref{the form when xneq0} holds, completing the proof.
 \end{proof}

\begin{remark}
Consider problem \eqref{reproblem in experiment}. Let  
\begin{equation*}
	\Psi^{\rm CapLog}(x):=\sum_{i=1}^q \psi^{\rm CapLog}(x_{\mathcal{G}_i})=\sum_{i=1}^q
 	f(x_{\mathcal{G}_i}).
\end{equation*}
 Then it follows that:
$
 	\Psi^{\rm CapLog}(x)=\sum\limits_{i=1}^q
 	f(x_{\mathcal{G}_i}),
$
 with $f$ defined as in Lemma \ref{proximal mapping for our problem}. In view of the separability of $\Psi^{\rm CapLog}$ with respect to $i$, we follow the similar argument to both the proof of \cite[Theorem 3]{PST2022} and the discussion in \cite[Section 5.1]{PB2014} to derive:
\begin{equation*}
	{\rm prox}_{\lambda \Psi^{\rm CapLog}}(x)=
	{\rm prox}_{\lambda f}(x_{\mathcal{G}_1}) \times
		{\rm prox}_{\lambda f}(x_{\mathcal{G}_2}) \times \cdots \times
	{\rm prox}_{\lambda f}(x_{\mathcal{G}_q}),
\end{equation*}
where $\prox_{\lambda f}$ is established in  \eqref{proximalM_logP} and \eqref{the form when xneq0}.
\end{remark}

\section*{Declarations}
\begin{itemize}
	\item[$\bullet$] {Conflict of interest/Competing interests:} Not Applicable.
\end{itemize}



\end{document}